\documentclass[12pt]{article}%
\usepackage{amsfonts}
\usepackage{amsmath}
\usepackage{lineno}
\usepackage{hyperref}
\usepackage{amssymb}
\usepackage{graphicx}%
\setcounter{MaxMatrixCols}{30}
\providecommand{\U}[1]{\protect\rule{.1in}{.1in}}
\newtheorem{theorem}{Theorem}[section]

\numberwithin{equation}{section}
\newenvironment{proof}[1][Proof]{\noindent\textbf{#1.} }{\ \rule{0.5em}{0.5em}}
\begin{document}

\begin{center}
\textbf{The Marshall-Olkin extended generalized Gompertz distribution\medskip
}\bigskip

\textbf{Lazhar Benkhelifa}\medskip\medskip\medskip

\textit{Laboratory of Applied Mathematics, Mohamed Khider University, Biskra,
07000, Algeria\medskip}

\textit{Departement of Mathematics and \textit{I}nformatics, Larbi Ben M'Hidi
University, Oum El Bouaghi, 04000, Algeria\bigskip}

l.benkhelifa@yahoo.fr
\end{center}

\noindent\textbf{Abstract}\medskip

\noindent A new four-parameter model called the Marshall-Olkin extended
generalized Gompertz distribution is introduced. Its hazard rate function can
be constant, increasing, decreasing, upside-down bathtub or bathtub-shaped
depending on its parameters. Some mathematical properties of this model such
as expansion for the density function, moments, moment generating function,
quantile function, mean deviations, mean residual life, order statistics and
R\'{e}nyi entropy are derived. The maximum likelihood technique is used to
estimate the unknown model parameters and the observed information matrix is
determined. The applicability of the proposed model is shown by means of a
real data set.\bigskip\bigskip\bigskip

\noindent\textbf{Keywords:} Marshall-Olkin distribution, Generalized Gompertz
distribution, Moments, Maximum likelihood estimation, Observed information
matrix.\bigskip\bigskip

\section{\textbf{Introduction\label{sec 1}}}

The Gompertz distribution which was proposed by Gompertz in 1825 \cite{r11}
plays an important role in modeling survival times, human mortality and
actuarial data. It is applied in several areas such as biology, gerontology,
computer and marketing science, among others. Some applications of the
Gompertz distribution can be found in \cite{r13}. In recent years, some
extensions of the Gompertz distribution have been proposed in the literature.
El-Gohary et al. \cite{r6} introduced the generalized Gompertz (GG)
distribution using the Lehmann alternative type I \cite{r14} with cumulative
distribution function (cdf)%
\begin{equation}
G\left(  x\right)  =\left(  1-e^{-\frac{\beta}{\lambda}\left(  e^{\lambda
x}-1\right)  }\right)  ^{\alpha},\ x,\alpha,\beta,\lambda>0. \label{1}%
\end{equation}
Jafari et al. \cite{r12} defined the beta Gompertz distribution, using the
beta-G generators defined by Eugene et al. (2002), with cdf%
\[
G\left(  x\right)  =\frac{1}{B\left(  a,b\right)  }\int_{0}^{1-e^{-\frac
{\beta}{\lambda}\left(  e^{\lambda x}-1\right)  }}t^{a-1}\left(  1-t\right)
^{b-1}dt,\text{ }x,\beta,\lambda,a,b>0.
\]
Roozegar et al. \cite{r18} introduced the McDonald Gompertz distribution,
using the\ McDonald-G generators defined by Alexander et al. \cite{r1}, with
cdf%
\[
G\left(  x\right)  =\frac{1}{B\left(  a/c,b\right)  }\int_{0}^{\left(
1-e^{-\frac{\beta}{\lambda}\left(  e^{\lambda x}-1\right)  }\right)  ^{c}%
}t^{a/c-1}\left(  1-t\right)  ^{b-1}dt,\text{ }x,\beta,\lambda,a,b,c>0\text{.}%
\]
\medskip

\noindent A new method for adding a parameter to a family of distributions is
proposed by Marshall and Olkin \cite{r15}. The resulting distribution, known
as Marshall-Olkin extended (denoted by the prefix "MOE" for short)
distribution. The MOE-G distribution gives more flexibility for modeling
various types of real data in practice. For any baseline cdf $G\left(
x\right)  ,$ $x\in%
\mathbb{R}
,$ the cdf of the MOE-G distribution is given by%
\begin{equation}
F\left(  x\right)  =\frac{G\left(  x\right)  }{\theta+\overline{\theta
}G\left(  x\right)  },\text{ }\theta>0, \label{2}%
\end{equation}
where $\overline{\theta}=1-\theta$ is\textbf{\ }a tilt parameter. The
probability density function (pdf) of the MOE-G distribution is%
\[
f\left(  x\right)  =\frac{\theta g\left(  x\right)  }{\left[  \theta
+\overline{\theta}G\left(  x\right)  \right]  ^{2}},
\]
where $g\left(  x\right)  =dG\left(  x\right)  /dx$ is the pdf of $G\left(
x\right)  .$ The MOE-G distribution becomes the baseline distribution when
$\theta=1.$\smallskip

\noindent Several new distributions have been proposed by using the
Marshall-Olkin method. For example, the MOE Pareto distribution has been
introduced by Alice and Jose \cite{r2}, the MOE Lomax distribution has been
introduced by Ghitany et al. \cite{r9}, the MOE gamma distribution has been
proposed by Rist\'{\i}c et al. \cite{r19}, the MOE normal distribution has
been proposed by Garcia et al. \cite{r8}, Rist\'{\i}c and Kundu \cite{r20}
defined the MOE generalized exponential distribution, Okasha et al. \cite{r17}
introduced the MOE generalized linear exponential distribution and Benkhelifa
\cite{r3} introduced the MOE generalized Lindley distribution. Cordeiro et al.
\cite{r4} studied the general properties of the MOE-G distribution.\medskip

\noindent In this paper, we propose a new four-parameter continuous model,
so-called the MOEGG distribution. We also provide several mathematical
properties of the proposed model and apply it to real data.\medskip

\noindent The remainder of this paper is organized as follows. In Section
\ref{sec 2}, we define the MOEGG distribution and present some of its special
cases. Some mathematical properties of the new model are given in Section
\ref{sec 3}.\ In Section \ref{sec 4}, we estimate the model parameters by the
maximum likelihood method and calculate the elements of the observed
information matrix. In Section \ref{sec 5}, we illustrate the flexibility of
the new distribution by using a real data set. Finally, we give some
conclusions in Section \ref{sec 6}.

\section{\textbf{The MOEGG distribution\label{sec 2}}}

By inserting (\ref{1}) into (\ref{2}), we get the cdf of the MOEGG
distribution that is%
\begin{equation}
F\left(  x\right)  =\frac{\left(  1-e^{-\frac{\beta}{\lambda}\left(
e^{\lambda x}-1\right)  }\right)  ^{\alpha}}{\theta+\overline{\theta}\left(
1-e^{-\frac{\beta}{\lambda}\left(  e^{\lambda x}-1\right)  }\right)  ^{\alpha
}}. \label{3}%
\end{equation}
The pdf corresponding to (\ref{3}) is%
\begin{equation}
f\left(  x\right)  =\frac{\alpha\beta\theta e^{\lambda x}e^{-\frac{\beta
}{\lambda}\left(  e^{\lambda x}-1\right)  }\left(  1-e^{-\frac{\beta}{\lambda
}\left(  e^{\lambda x}-1\right)  }\right)  ^{\alpha-1}}{\left[  \theta
+\overline{\theta}\left(  1-e^{-\frac{\beta}{\lambda}\left(  e^{\lambda
x}-1\right)  }\right)  ^{\alpha}\right]  ^{2}}. \label{4}%
\end{equation}
Hereafter, when $X$ is a random variable following the MOEGG distribution, it
will be denoted by $X\sim MOEGG\left(  \alpha,\beta,\lambda,\theta\right)
$.\ Plots of the MOEGG density for selected parameter values are displayed in
Figure 1. We observe that the density function can take various forms,
depending on the parameter values. It is evident that the MOEGG distribution
is much more flexible than the GG distribution.\smallskip

\noindent The hazard rate function defined by $h(x)=f(x)/[1-F(x)]$ is an
important quantity characterizing lifetime phenomena. It can be loosely
interpreted as the conditional probability of failure, given that it has
survived to time $t$. The hazard rate function for the MOEGG random variable
is given by%
\begin{equation}
h\left(  x\right)  =\frac{\alpha\beta e^{\lambda x}e^{-\frac{\beta}{\lambda
}\left(  e^{\lambda x}-1\right)  }\left(  1-e^{-\frac{\beta}{\lambda}\left(
e^{\lambda x}-1\right)  }\right)  ^{\alpha-1}}{\left[  1-\left(
1-e^{-\frac{\beta}{\lambda}\left(  e^{\lambda x}-1\right)  }\right)  ^{\alpha
}\right]  \left[  \theta+\overline{\theta}\left(  1-e^{-\frac{\beta}{\lambda
}\left(  e^{\lambda x}-1\right)  }\right)  ^{\alpha}\right]  }. \label{5}%
\end{equation}

\noindent Plots of the hazard rate function of the MOEGG distribution for
different values are given in Figure 2. We observe that the hazard rate
functions of the MOEGG distribution can be constant, increasing, decreasing,
upside-down bathtub and bathtub-shaped depending basically on the values of
the parameters.\medskip

\noindent The MOEGG distribution includes as special sub-models the following distributions:

\begin{itemize}
\item If $\theta=1$, then we obtain the GG distribution.

\item If $\theta=1\ $and $\lambda$ tends to zero, then we get the generalized
exponential distribution.

\item If $\theta=1$ and\ $\alpha=1,$ then we get the Gompertz distribution.

\item If $\theta=1,$\ $\alpha=1,$ and $\lambda$ tends to zero, then we get the
exponential distribution.

\item If $\alpha=1,$ then we get the MOE Gompertz distribution.

\item If $\lambda$ tends to zero, then we get the MOE generalized exponential distribution.

\item If $\alpha=1$ and $\lambda$ tends to zero$,$ then we get the MOE
exponential distribution.
\end{itemize}

%

{\parbox[b]{6.0995in}{\begin{center}
\includegraphics[
height=2.9585in,
width=6.0995in
]%
{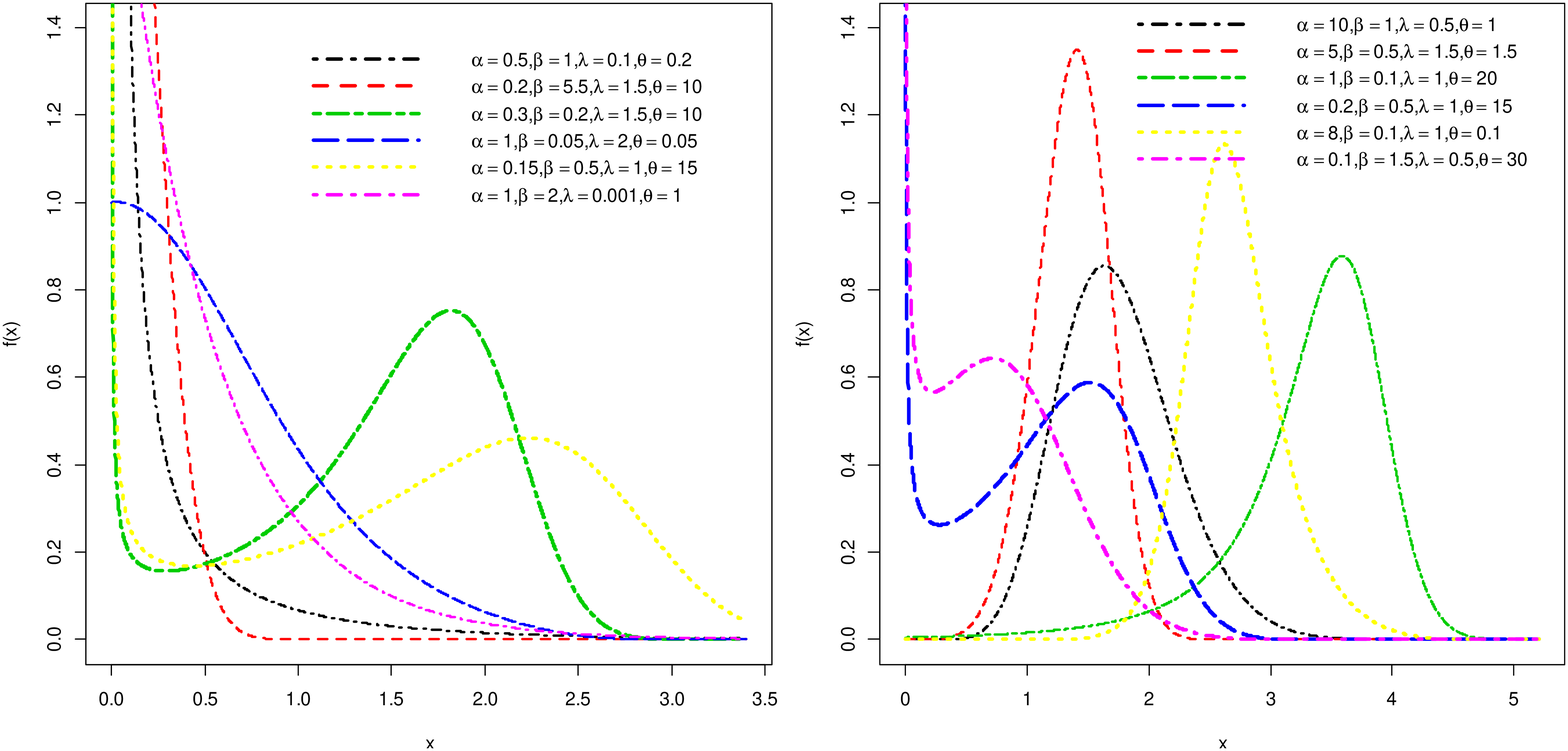}%
\\
Figure 1. The MOEGG density function for some values of $\alpha,\beta
,\lambda\ $and $\theta.$%
\end{center}}}
%

{\parbox[b]{6.0995in}{\begin{center}
\includegraphics[
height=2.9585in,
width=6.0995in
]%
{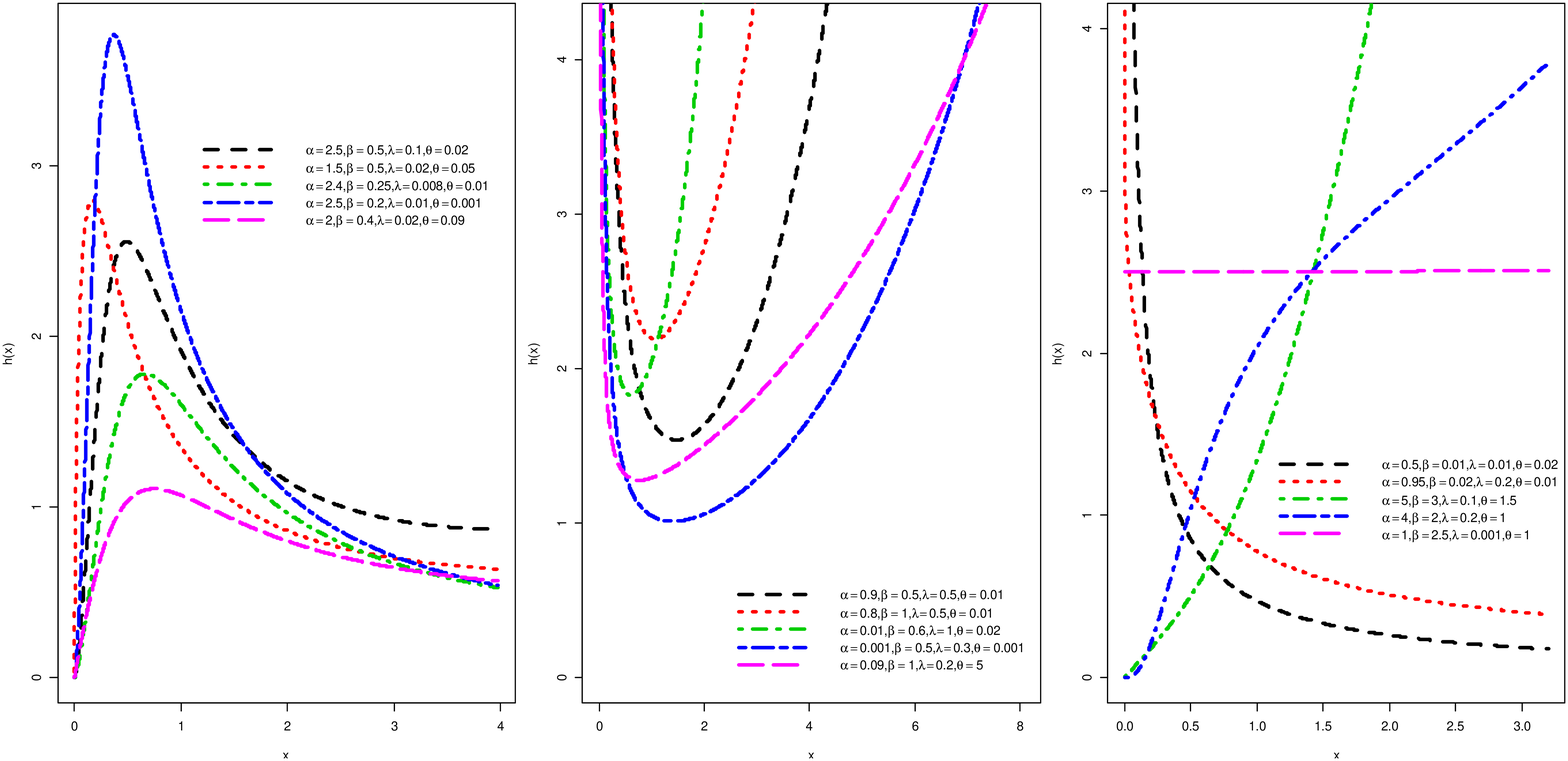}%
\\
Figure 2. The MOEGG hazard rate function for some values of $\alpha
,\beta,\lambda\ $and $\theta.$%
\end{center}}}

\section{\textbf{Mathematical properties\label{sec 3}}}

In this section, we study some mathematical properties of the MOEGG model.

\subsection{\textit{Useful expansions}}

In this subsection, we give useful expansions for the pdf and cdf of the MOEGG
distribution. If $|z|<1$ and $\delta>0$ is a real non-integer, we have the
expansion%
\begin{equation}
\left(  1-z\right)  ^{-\delta}=\sum_{j=0}^{\infty}\binom{\delta+j-1}{j}z^{j},
\label{6}%
\end{equation}
If $\delta$ is an integer, the index $j$ in the previous sum stops at
$\delta-1$. We can rewrite, the pdf of the MOEGG distribution (\ref{4}) as
\[
f\left(  x\right)  =\frac{\alpha\beta\theta e^{\lambda x}e^{-\frac{\beta
}{\lambda}\left(  e^{\lambda x}-1\right)  }\left(  1-e^{-\frac{\beta}{\lambda
}\left(  e^{\lambda x}-1\right)  }\right)  ^{\alpha-1}}{\left[  1-\overline
{\theta}\left(  1-\left(  1-e^{-\frac{\beta}{\lambda}\left(  e^{\lambda
x}-1\right)  }\right)  ^{\alpha}\right)  \right]  ^{2}}.
\]
Then, if $\ \theta\in\left(  0,1\right)  ,$ by applying the expansion
(\ref{6}), we get%
\begin{align*}
\left[  1-\overline{\theta}\left(  1-\left(  1-e^{-\frac{\beta}{\lambda
}\left(  e^{\lambda x}-1\right)  }\right)  ^{\alpha}\right)  \right]  ^{-2}
&  =\sum_{j=0}^{\infty}\left(  j+1\right)  \overline{\theta}^{j}\left(
1-\left(  1-e^{-\frac{\beta}{\lambda}\left(  e^{\lambda x}-1\right)  }\right)
^{\alpha}\right)  ^{j}\\
&  =\sum_{j=0}^{\infty}\left(  j+1\right)  \overline{\theta}^{j}\sum_{i=0}%
^{j}\left(  -1\right)  ^{i}\binom{j}{i}\left(  1-e^{-\frac{\beta}{\lambda
}\left(  e^{\lambda x}-1\right)  }\right)  ^{\alpha i}.
\end{align*}
Therefore%
\[
f\left(  x\right)  =\alpha\beta\theta e^{\lambda x}e^{-\frac{\beta}{\lambda
}\left(  e^{\lambda x}-1\right)  }\sum_{j=0}^{\infty}\left(  j+1\right)
\overline{\theta}^{j}\sum_{i=0}^{j}\left(  -1\right)  ^{i}\binom{j}{i}\left(
1-e^{-\frac{\beta}{\lambda}\left(  e^{\lambda x}-1\right)  }\right)
^{\alpha\left(  i+1\right)  -1}.
\]
By interchanging $\sum_{j=0}^{\infty}\sum_{i=0}^{j}$ with $\sum_{i=0}^{\infty
}\sum_{j=i}^{\infty}$ in the last equation, we get%
\begin{equation}
f\left(  x\right)  =\sum_{i=0}^{\infty}w_{i}g\left(  x;\alpha\left(
i+1\right)  ,\beta,\lambda\right)  , \label{7}%
\end{equation}
\bigskip where%
\[
w_{i}=\frac{\theta\left(  -1\right)  ^{i}}{\left(  i+1\right)  }\sum
_{j=i}^{\infty}\binom{j}{i}\left(  j+1\right)  \overline{\theta}^{j},\text{
for }i=0,1,\ldots\text{.}%
\]
By integrating (\ref{7}), we obtain%
\[
F\left(  x\right)  =\sum_{i=0}^{\infty}w_{i}G\left(  x;\alpha\left(
i+1\right)  ,\beta,\lambda\right)  .
\]
\bigskip

\noindent Otherwise, if $\theta>1,$ we can rewrite the MOEGG density function
(\ref{4}) as%
\[
f\left(  x\right)  =\frac{\alpha\beta e^{\lambda x}e^{-\frac{\beta}{\lambda
}\left(  e^{\lambda x}-1\right)  }\left(  1-e^{-\frac{\beta}{\lambda}\left(
e^{\lambda x}-1\right)  }\right)  ^{\alpha-1}}{\theta\left[  1-\left(
1-\theta^{-1}\right)  \left(  1-e^{-\frac{\beta}{\lambda}\left(  e^{\lambda
x}-1\right)  }\right)  ^{\alpha}\right]  ^{2}},
\]
then by using the expansion (\ref{6}), we get
\begin{equation}
f\left(  x\right)  =\sum_{i=0}^{\infty}v_{i}g\left(  x;\alpha\left(
i+1\right)  ,\beta,\lambda\right)  , \label{8}%
\end{equation}
where $v_{i}=\theta^{-1}\left(  1-\theta^{-1}\right)  ^{i}$ for $i=0,1,\ldots
$. By integrating (\ref{8}), we obtain%
\[
F\left(  x\right)  =\sum_{i=0}^{\infty}v_{i}G\left(  x;\alpha\left(
i+1\right)  ,\beta,\lambda\right)  .
\]
\medskip

\noindent It easy to verify that $\sum_{i=0}^{\infty}v_{i}=\sum_{i=0}^{\infty
}w_{i}=1.$ Equations (\ref{7}) and (\ref{8}) show that the MOEGG density
function (\ref{4}) is an infinite linear combination of GG density functions
and have the same form except for the coefficients which are $w_{i}^{\prime}s$
in (\ref{7}) and $v_{i}^{\prime}s$ in (\ref{8}). Hence, several mathematical
properties of the MOEGG distribution can be obtained directly from the
properties of the GG distribution.\bigskip

\subsection{\textit{Moments}}

Let $X\sim MOEGG\left(  \alpha,\beta,\lambda,\theta\right)  $. The $r$th
moment of $X,$\ say $E\left(  X^{r}\right)  ,$ is given by%
\[
E\left(  X^{r}\right)  =\int_{0}^{\infty}x^{r}f\left(  x\right)  dx.
\]
From (\ref{8}), if $\theta>1$, we have%
\[
E\left(  X^{r}\right)  =\sum_{i=0}^{\infty}v_{i}E\left(  X_{i+1}^{r}\right)
,
\]
where $X_{i+1}$ denotes a random variable having the GG density function
$g\left(  x;\alpha\left(  i+1\right)  ,\beta,\lambda\right)  .$ Then, we have%
\[
E\left(  X_{i+1}^{r}\right)  =\alpha\left(  i+1\right)  \beta\int_{0}^{\infty
}x^{r}e^{\lambda x}e^{-\frac{\beta}{\lambda}\left(  e^{\lambda x}-1\right)
}\left(  1-e^{-\frac{\beta}{\lambda}\left(  e^{\lambda x}-1\right)  }\right)
^{\alpha\left(  i+1\right)  -1}dx.
\]
Since $0<e^{-\frac{\beta}{\lambda}\left(  e^{\lambda x}-1\right)  }<1$ for
$x>0$, by applying the binomial expansion of $\left(  1-e^{-\frac{\beta
}{\lambda}\left(  e^{\lambda x}-1\right)  }\right)  ^{\alpha\left(
i+1\right)  -1}$ given by%
\[
\left(  1-e^{-\frac{\beta}{\lambda}\left(  e^{\lambda x}-1\right)  }\right)
^{\alpha\left(  i+1\right)  -1}=\sum_{j=0}^{\infty}\binom{\alpha\left(
i+1\right)  -1}{j}\left(  -1\right)  ^{j}e^{-\frac{j\beta}{\lambda}\left(
e^{\lambda x}-1\right)  },
\]
we obtain%
\[
E\left(  X_{i+1}^{r}\right)  =\alpha\left(  i+1\right)  \beta\sum
_{j=0}^{\infty}\binom{\alpha\left(  i+1\right)  -1}{j}\left(  -1\right)
^{j}e^{\frac{\beta}{\lambda}\left(  j+1\right)  }\int_{0}^{\infty}%
x^{r}e^{\lambda x}e^{-\frac{\beta}{\lambda}\left(  j+1\right)  e^{\lambda x}%
}dx;
\]
By substituting $z=e^{\lambda x}$ in the above integral, we get%
\[
E\left(  X_{i+1}^{r}\right)  =\alpha\left(  i+1\right)  \beta\sum
_{j=0}^{\infty}\binom{\alpha\left(  i+1\right)  -1}{j}\frac{\left(  -1\right)
^{j}e^{\frac{\beta}{\lambda}\left(  j+1\right)  }}{\lambda^{r+1}}\int%
_{1}^{\infty}\left(  \ln z\right)  ^{r}e^{-\frac{\beta}{\lambda}\left(
j+1\right)  z}dz.
\]
By integration by parts, on the above integral, we get
\[
E\left(  X_{i+1}^{r}\right)  =\frac{\alpha\left(  i+1\right)  r!}{\lambda^{r}%
}\sum_{j=0}^{\infty}\binom{\alpha\left(  i+1\right)  -1}{j}\frac{\left(
-1\right)  ^{j}e^{\frac{\beta}{\lambda}\left(  j+1\right)  }}{\left(
j+1\right)  }E_{1}^{r-1}\left(  \frac{\beta}{\lambda}\left(  j+1\right)
\right)  ,
\]
where $E_{s}^{k}\left(  z\right)  =\frac{1}{k!}\int_{1}^{\infty}\left(  \ln
x\right)  ^{k}x^{-s}e^{-zx}dz\ $is the generalized integro-exponential
function (see \cite{r11}).\bigskip

\noindent Therefore%
\[
E\left(  X^{r}\right)  =\sum_{i,j=0}^{\infty}\binom{\alpha\left(  i+1\right)
-1}{j}\frac{\left(  -1\right)  ^{j}r!\alpha\left(  i+1\right)  v_{i}%
e^{\frac{\beta}{\lambda}\left(  j+1\right)  }}{\left(  j+1\right)  \lambda
^{r}}E_{1}^{r-1}\left(  \frac{\beta}{\lambda}\left(  j+1\right)  \right)  .
\]

\bigskip

\noindent Similary, if $\theta\in\left(  0,1\right)  $, the $r$th moment of
$X$ is obtained, from (\ref{7}), by replacing $w_{j}$ with $v_{j}$ in the
above equation.\bigskip

\noindent The first four moments can be used to describe any data. The first
moment or the mean is a measure of the center of the distribution. The second
moment about the mean is equal to the variance which measures the spread of
the distribution about the mean. The skewness measures the symmetry of a
distribution while the kurtosis measures the peakedness of a distribution. The
skewness and the kurtosis, respectively, are%
\[
\gamma_{1}=\frac{\mu_{3}}{\mu_{2}^{3/2}}\text{ \ and \ }\gamma_{2}=\frac
{\mu_{4}}{\mu_{2}^{2}},
\]
where $\mu_{2},$ $\mu_{3}$ and $\mu_{4}$ are the second, third and fourth
central moments. The $p$th central moments, denoted by $\mu_{p},$ or the $p$th
moments about the mean are expressed in terms of moments. By definition%
\begin{align*}
\mu_{p}  &  =E\left(  X-E\left(  X\right)  \right)  ^{p}\\
&  =\sum_{m=0}^{p}\frac{\left(  -1\right)  ^{m}p!}{m!\left(  p-m\right)
!}\left(  E\left(  X\right)  \right)  ^{m}E\left(  X^{p-m}\right)  .
\end{align*}
Table 1 lists the first four non-central moments, variance, skewness and
kurtosis for selected values of the parameter $\theta$ of the MOEGG
distribution for $\alpha=0.5,\beta=1$ and $\lambda=2.$

\begin{center}%
\begin{table}[h] \centering
\caption{Moments, variance, skewness and kurtosis of the MOEGG distribution.}%
\begin{tabular}
[c]{lcccc}\hline
$E\left(  X^{r}\right)  $ & $\theta=1.5$ & $\theta=5$ & $\theta=0.5$ &
$\theta=0.1$\\\hline
$E\left(  X\right)  $ & $0.37187$ & $0.58696$ & $0.20827$ &
\multicolumn{1}{l}{$0.06695$}\\
$E\left(  X^{2}\right)  $ & $0.22837$ & $0.44545$ & $0.10431$ &
\multicolumn{1}{l}{$0.02598$}\\
$E\left(  X^{3}\right)  $ & $0.16956$ & $0.37802$ & $0.06976$ &
\multicolumn{1}{l}{$0.01583$}\\
$E\left(  X^{4}\right)  $ & $0.14115$ & $0.34539$ & $0.05456$ &
\multicolumn{1}{l}{$0.01183$}\\
Variance & $0.09008$ & $0.10093$ & $0.06094$ & \multicolumn{1}{l}{$0.021498$%
}\\
Skewness & $0.65217$ & $-0.06003$ & $1.50583$ & \multicolumn{1}{l}{$121.95925
$}\\
Kurtosis & $-0.40545$ & $-0.78352$ & $89.26947$ &
\multicolumn{1}{l}{$831.79977$}\\\hline
\end{tabular}%
\end{table}%

\end{center}

\subsection{\textit{Moment generating function}}

Let $X\sim MOEGG\left(  \alpha,\beta,\lambda,\theta\right)  .$ The moment
generating function $M\left(  t\right)  =E\left(  e^{tX}\right)  $ of $X$ is%
\[
M\left(  t\right)  =\int_{0}^{\infty}e^{tx}f\left(  x\right)  dx.
\]
In this Subsection, we give two representations for $M\left(  t\right)  $ only
for the case\ $\theta>1,$ since the case $\theta\in\left(  0,1\right)  $ is
completely analogous.\bigskip

\noindent A first representation, using the Maclaurin series expansion of an
exponential function, is%
\[
M\left(  t\right)  =\sum_{r=0}^{\infty}\frac{t^{r}}{r!}E\left(  X^{r}\right)
,
\]
where
\[
E\left(  X^{r}\right)  =\sum_{i,j=0}^{\infty}\binom{\alpha\left(  i+1\right)
-1}{j}\frac{\left(  -1\right)  ^{j}v_{i}\alpha\left(  i+1\right)
r!e^{\frac{\beta}{\lambda}\left(  j+1\right)  }}{\left(  j+1\right)
\lambda^{r}}E_{1}^{r-1}\left(  \frac{\beta}{\lambda}\left(  j+1\right)
\right)  .
\]
Then%
\[
M\left(  t\right)  =\sum_{i,j,r=0}^{\infty}\binom{\alpha\left(  i+1\right)
-1}{j}\frac{\left(  -1\right)  ^{j}v_{i}\alpha\left(  i+1\right)
t^{r}e^{\frac{\beta}{\lambda}\left(  j+1\right)  }}{\left(  j+1\right)
\lambda^{r}}E_{1}^{r-1}\left(  \frac{\beta}{\lambda}\left(  j+1\right)
\right)  .
\]
\bigskip A second representation for $M\left(  t\right)  $ follows from
(\ref{8}) as%
\[
M\left(  t\right)  =\sum_{i=0}^{\infty}v_{i}M_{i+1}\left(  t\right)  ,
\]
where%
\[
M_{i+1}\left(  t\right)  =\alpha\left(  i+1\right)  \beta\int_{0}^{\infty
}e^{tx}e^{\lambda x}e^{-\frac{\beta}{\lambda}\left(  e^{\lambda x}-1\right)
}\left(  1-e^{-\frac{\beta}{\lambda}\left(  e^{\lambda x}-1\right)  }\right)
^{\alpha\left(  i+1\right)  -1}dx.
\]
By using the binomial series expansion, we get%
\[
M_{i+1}\left(  t\right)  =\alpha\left(  i+1\right)  \beta\sum_{j=0}^{\infty
}\binom{\alpha\left(  i+1\right)  -1}{j}\left(  -1\right)  ^{j}e^{\frac{\beta
}{\lambda}\left(  j+1\right)  }\int_{0}^{\infty}e^{tx}e^{\lambda x}%
e^{-\frac{\beta}{\lambda}\left(  j+1\right)  e^{\lambda x}}dx.
\]
The change of variable $z=e^{\lambda x}$ yields%
\[
M_{i+1}\left(  t\right)  =\frac{\alpha\left(  i+1\right)  \beta}{\lambda}%
\sum_{j=0}^{\infty}\binom{\alpha\left(  i+1\right)  -1}{j}\left(  -1\right)
^{j}e^{\frac{\beta}{\lambda}\left(  j+1\right)  }\int_{1}^{\infty}z^{\frac
{t}{\lambda}}e^{-\frac{\beta}{\lambda}\left(  j+1\right)  z}dz.
\]
Then, by setting $y=\frac{\beta}{\lambda}\left(  j+1\right)  z$, we obtain%
\[
M_{i+1}\left(  t\right)  =\alpha\left(  i+1\right)  \left(  \frac{\lambda
}{\beta}\right)  ^{\frac{t}{\lambda}+1}\sum_{j=0}^{\infty}\binom{\alpha\left(
i+1\right)  -1}{j}\frac{\left(  -1\right)  ^{j}e^{\frac{\beta}{\lambda}\left(
j+1\right)  }}{\left(  j+1\right)  ^{\frac{t}{\lambda}+1}}\Gamma\left(
\frac{t}{\lambda}+1,\frac{\beta}{\lambda}\left(  j+1\right)  \right)  ,
\]
where $\Gamma\left(  u,v\right)  $ is the incomplete gamma function defined by%
\[
\Gamma\left(  u,v\right)  =\int_{v}^{\infty}x^{u-1}e^{-x}dx.
\]
Thererfore$,$ if $\theta>1,$ we have
\[
M\left(  t\right)  =\sum_{i,j=0}^{\infty}\binom{\alpha\left(  i+1\right)
-1}{j}\left(  \frac{\lambda}{\beta}\right)  ^{\frac{t}{\lambda}+1}\frac
{v_{i}\alpha\left(  i+1\right)  \left(  -1\right)  ^{j}e^{\frac{\beta}%
{\lambda}\left(  j+1\right)  }}{\left(  j+1\right)  ^{\frac{t}{\lambda}+1}%
}\Gamma\left(  \frac{t}{\lambda}+1,\frac{\beta}{\lambda}\left(  j+1\right)
\right)  .
\]

\subsection{\textit{Quantile function}}

The quantile function of the MOEGG distribution is given by%
\[
Q(u)=F^{-1}\left(  u\right)  =\frac{1}{\lambda}\ln\left(  1-\frac{\beta
}{\lambda}\ln\left(  1-\left(  \frac{\theta u}{1-\overline{\theta}u}\right)
^{\alpha}\right)  \right)  ,\text{ }u\in\left(  0,1\right)  ,
\]
where $F^{-1}\left(  .\right)  $ is the inverse distribution function. It is
easy to simulate the MOEGG distribution. Let $U$ be a continuous uniform
variable on the unit interval $\left(  0,1\right)  $. Thus, using the inverse
transformation method, the random variable $X=Q\left(  U\right)  \sim
MOEGG\left(  \alpha,\beta,\lambda,\theta\right)  $. We can use this equation
to generate random numbers from the MOEGG distribution when the parameters
$\alpha,\beta,\lambda$ and $\theta$ are known.\bigskip

\subsection{\textit{Mean deviations}}

Let $X\sim MOEGG\left(  \alpha,\beta,\lambda,\theta\right)  $. The mean
deviations of $X$ about the mean $\mu=E(X)$ and about the median $M$ can be
used as measures of spread in a population. They are defined by%
\[
\delta_{1}=\int_{0}^{\infty}\left\vert x-\mu\right\vert f\left(  x\right)
dx\text{ and }\delta_{2}=\int_{0}^{\infty}\left\vert x-M\right\vert f\left(
x\right)  dx,
\]
respectively. These measures can be expressed as%
\[
\delta_{1}=2\mu F\left(  \mu\right)  -2\mu+2\int_{\mu}^{\infty}xf\left(
x\right)  dx\text{ \ and \ }\delta_{2}=-2\mu+\int_{M}^{\infty}xf\left(
x\right)  dx.
\]
We consider only the cas $\theta>1,$ since we can replace $w_{j}$ by $v_{j}$
when $\theta\in\left(  0,1\right)  .$ Then, from (\ref{8}), we have%
\[
\int_{\eta}^{\infty}xf\left(  x\right)  dx=\beta\theta\sum_{i=0}^{\infty
}\alpha\left(  i+1\right)  v_{i}\int_{\eta}^{\infty}xe^{\lambda x}%
e^{-\frac{\beta}{\lambda}\left(  e^{\lambda x}-1\right)  }\left(
1-e^{-\frac{\beta}{\lambda}\left(  e^{\lambda x}-1\right)  }\right)
^{\alpha\left(  i+1\right)  -1}.
\]
By using the binomial series expansion of $\left(  1-e^{-\frac{\beta}{\lambda
}\left(  e^{\lambda x}-1\right)  }\right)  ^{\alpha\left(  i+1\right)  -1}$,
we get%
\[
\int_{\eta}^{\infty}xf\left(  x\right)  dx=\beta\theta\sum_{i,j=0}^{\infty
}\binom{\alpha\left(  i+1\right)  -1}{j}\left(  -1\right)  ^{j}v_{i}%
\alpha\left(  i+1\right)  \int_{\eta}^{\infty}xe^{\lambda x}e^{-\frac{\left(
j+1\right)  \beta}{\lambda}\left(  e^{\lambda x}-1\right)  }dx.
\]
By using the substitution $z=e^{\lambda x}$ in the above integral, we can get%
\begin{align*}
\int_{\eta}^{\infty}xf\left(  x\right)  dx  &  =\frac{\beta\theta}{\lambda
^{2}}\sum_{i,j=0}^{\infty}\binom{\alpha\left(  i+1\right)  -1}{j}\left(
-1\right)  ^{j}\alpha\left(  i+1\right)  v_{i}e^{\frac{\beta}{\lambda}\left(
j+1\right)  }\\
&  \times\int_{e^{\lambda\eta}}^{\infty}e^{-\frac{\beta}{\lambda}\left(
j+1\right)  z}\ln zdx.
\end{align*}
By changing variable and integrating by parts yield%
\begin{align*}
\int_{\eta}^{\infty}xf\left(  x\right)  dx  &  =\frac{\beta\theta}{\lambda
^{2}}\sum_{i,j=0}^{\infty}\binom{\alpha\left(  i+1\right)  -1}{j}\left(
-1\right)  ^{j}\alpha\left(  i+1\right)  v_{i}e^{\frac{\beta}{\lambda}\left(
j+1\right)  }\\
&  \times\left[  \lambda\eta e^{-\frac{\beta}{\lambda}\left(  j+1\right)
e^{\lambda\eta}}+\Gamma\left(  0,\frac{\beta}{\lambda}\left(  j+1\right)
e^{\lambda\eta}\right)  \right]  ,
\end{align*}
where $\Gamma\left(  u,v\right)  =\int_{v}^{\infty}x^{u-1}e^{-x}dx$ is the
incomplete gamma function.\medskip

\noindent Therefore
\begin{align*}
\delta_{1}  &  =2\mu F\left(  \mu\right)  -2\mu+\frac{2\beta\theta}%
{\lambda^{2}}\sum_{i,j=0}^{\infty}\binom{\alpha\left(  i+1\right)  -1}%
{j}\left(  -1\right)  ^{j}\alpha\left(  i+1\right)  v_{i}e^{\frac{\beta
}{\lambda}\left(  j+1\right)  }\\
&  \times\left[  \lambda\mu e^{-\frac{\beta}{\lambda}\left(  j+1\right)
e^{\lambda\mu}}+\Gamma\left(  0,\frac{\beta}{\lambda}\left(  j+1\right)
e^{\lambda\mu}\right)  \right]  ,
\end{align*}
and
\begin{align*}
\delta_{2}  &  =-2\mu+\frac{\beta\theta}{\lambda^{2}}\sum_{i,j=0}^{\infty
}\binom{\alpha\left(  i+1\right)  -1}{j}\left(  -1\right)  ^{j}\alpha\left(
i+1\right)  v_{i}e^{\frac{\beta}{\lambda}\left(  j+1\right)  }\\
&  \times\left[  \lambda Me^{-\frac{\beta}{\lambda}\left(  j+1\right)
e^{\lambda M}}+\Gamma\left(  0,\frac{\beta}{\lambda}\left(  j+1\right)
e^{\lambda M}\right)  \right]  .
\end{align*}

\bigskip

\noindent Table 2 gives $\delta_{1}$ and $\ \delta_{2}$ for selected parameter
values of the $MOEGG\left(  \alpha,\beta,\lambda,\theta\right)  $ distribution.

\begin{center}%
\begin{table}[h] \centering
\caption{Means deviations of the MOEGG distribution.}%
\begin{tabular}
[c]{cccccc}\hline
$\alpha$ & $\beta$ & $\lambda$ & $\theta$ & $\delta_{1}$ & $\delta_{2}%
$\\\hline
$2$ & $1.5$ & $0.5$ & $0.5$ & $0.31524$ & $0.01137$\\
$2$ & $1.5$ & $0.5$ & $0.8$ & $0.34505$ & $0.10172$\\
$2$ & $1.5$ & $0.5$ & $5$ & $0.40255$ & $1.19131$\\
$2$ & $1.5$ & $0.5$ & $15$ & $0.39132$ & $1.50232$\\\hline
\end{tabular}%
\end{table}%
\bigskip
\end{center}

\subsection{\textit{Mean residual life}}

The mean residual life, also known as the mean remaining life, plays an
important role in many fields such as industrial reliability, biomedical
science, life insurance and demography among others. The mean residual life
function$,$ at point $t$, of a lifetime random variable $X\sim MOEGG\left(
\alpha,\beta,\lambda,\theta\right)  $ is%
\begin{align*}
\mu\left(  t\right)   &  =\frac{1}{1-F\left(  t\right)  }\int_{t}^{\infty
}\left[  1-F\left(  x\right)  \right]  dx\\
&  =\frac{\theta+\overline{\theta}\left(  1-e^{-\frac{\beta}{\lambda}\left(
e^{\lambda t}-1\right)  }\right)  ^{\alpha}}{-1+2\theta+\overline{\theta
}\left(  1-e^{-\frac{\beta}{\lambda}\left(  e^{\lambda t}-1\right)  }\right)
^{\alpha}}\int_{t}^{\infty}\frac{-1+2\theta+\overline{\theta}\left(
1-e^{-\frac{\beta}{\lambda}\left(  e^{\lambda x}-1\right)  }\right)  ^{\alpha
}}{\theta+\overline{\theta}\left(  1-e^{-\frac{\beta}{\lambda}\left(
e^{\lambda x}-1\right)  }\right)  ^{\alpha}}dx,\text{ \ }t\geq0.
\end{align*}

\noindent Numerical values of $\mu\left(  5\right)  $ and $\mu\left(
0.5\right)  $ for some values of the parameters $\alpha,\beta,\lambda$ and
$\theta$ are presented in Table 3.

\begin{center}%
\begin{table}[h] \centering
\caption{Mean residual life of the MOEGG distribution.}%
\begin{tabular}
[c]{cccccc}\hline
$\alpha$ & $\beta$ & $\lambda$ & $\theta$ & $\mu\left(  5\right)  $ &
$\mu\left(  0.5\right)  $\\\hline
$2.5$ & $1.5$ & $0.5$ & $0.1$ & $0.05543$ & $0.29961$\\
$0.5$ & $0.25$ & $0.3$ & $0.1$ & $0.71577$ & $1.09276$\\
$2.5$ & $1.5$ & $0.5$ & $0.8$ & $0.05029$ & $0.47139$\\
$0.5$ & $0.25$ & $0.3$ & $0.8$ & $0.72227$ & $1.57295$\\
$2.5$ & $1.5$ & $0.5$ & $2.5$ & $0.04995$ & $0.67338$\\
$0.5$ & $0.25$ & $0.3$ & $2.5$ & $0.73767$ & $2.21551$\\
$2.5$ & $1.5$ & $0.5$ & $8.5$ & $0.04984$ & $0.95539$\\
$0.5$ & $0.25$ & $0.3$ & $8.5$ & $0.78810$ & $3.16767$\\\hline
\end{tabular}%
\end{table}%

\end{center}

\subsection{\textit{R\'{e}nyi entropy}}

The entropy of a random variable is a measure of uncertainty variation and has
been used in various situations in science and engineering. In the literature,
several measures of entropy have been studied. Here, we give the R\'{e}nyi
entropy for the MOEGG\ distribution. The R\'{e}nyi entropy for the MOEGG
distribution is%
\[
I_{R}\left(  s\right)  =\frac{1}{1-s}\ln\left(  \int_{0}^{\infty}f^{s}\left(
x\right)  dx\right)  ,\text{ }s>0,\text{ }s\neq1,
\]
where%
\[
f^{s}\left(  x\right)  =\frac{\left(  \alpha\beta\theta\right)  ^{s}e^{\lambda
sx}e^{-\frac{s\beta}{\lambda}\left(  e^{\lambda x}-1\right)  }\left(
1-e^{-\frac{\beta}{\lambda}\left(  e^{\lambda x}-1\right)  }\right)
^{s\left(  \alpha-1\right)  }}{\left[  \theta+\overline{\theta}\left(
1-e^{-\frac{\beta}{\lambda}\left(  e^{\lambda x}-1\right)  }\right)  ^{\alpha
}\right]  ^{2s}}.
\]
By applying the expansion (\ref{6}) to the denominatorin the previous
equation, we can rewrite after some algebra $f^{s}\left(  x\right)  $, for
$\theta\in\left(  0,1\right)  ,$ as%
\[
f^{s}\left(  x\right)  =\left(  \alpha\beta\theta\right)  ^{s}\sum
_{k=0}^{\infty}\sum_{i=0}^{\infty}\sum_{j=i}^{\infty}\binom{2s+j-1}{j}%
\binom{j}{i}\binom{\alpha i}{k}\overline{\theta}^{j}\left(  -1\right)
^{k+i}e^{\lambda sx}e^{-\frac{\beta}{\lambda}\left(  k+s\right)  \left(
e^{\lambda x}-1\right)  },
\]
and for $\theta>1,$ we get%
\begin{align*}
f^{s}\left(  x\right)   &  =\left(  \frac{\alpha\beta}{\theta}\right)
^{s}\sum_{k=0}^{\infty}\sum_{i=0}^{\infty}\sum_{j=i}^{\infty}\binom{2s+j-1}%
{j}\binom{\alpha\left(  j+s\right)  -s}{k}\\
&  \times\left(  1-\theta^{-1}\right)  ^{j}\left(  -1\right)  ^{k}e^{\lambda
sx}e^{-\frac{\beta}{\lambda}\left(  k+s\right)  \left(  e^{\lambda
x}-1\right)  }.
\end{align*}
\bigskip Thus, the R\'{e}nyi entropy is%
\[
I_{R}\left(  s\right)  =\frac{1}{1-s}\ln\left(  \int_{0}^{\infty}f^{s}\left(
x\right)  dx\right)  ,
\]
where, if $\theta\in\left(  0,1\right)  $%
\begin{align*}
\int_{0}^{\infty}f^{\gamma}\left(  x\right)  dx  &  =\frac{\left(
\alpha\lambda\theta\right)  ^{s}}{\lambda\left(  k+s\right)  ^{s}}\sum
_{k=0}^{\infty}\sum_{i=0}^{\infty}\sum_{j=i}^{\infty}\binom{2s+j-1}{j}%
\binom{j}{i}\binom{\alpha i}{k}\\
&  \times\overline{\theta}^{j}\left(  -1\right)  ^{k+i}e^{\frac{\beta}%
{\lambda}\left(  k+s\right)  }\Gamma\left(  s,\frac{\beta}{\lambda}\left(
k+s\right)  \right)  ,
\end{align*}
and if $\theta>1$%
\begin{align*}
\int_{0}^{\infty}f^{\gamma}\left(  x\right)  dx  &  =\frac{\lambda^{s-1}%
\alpha^{s}}{\left(  \theta k+\theta s\right)  ^{s}}\sum_{k=0}^{\infty}%
\sum_{i=0}^{\infty}\sum_{j=i}^{\infty}\binom{2s+j-1}{j}\binom{\alpha\left(
j+s\right)  -s}{k}\\
&  \times\left(  1-\theta^{-1}\right)  ^{j}\left(  -1\right)  ^{k}%
e^{\frac{\beta}{\lambda}\left(  k+s\right)  }\Gamma\left(  s,\frac{\beta
}{\lambda}\left(  k+s\right)  \right)  ,
\end{align*}
with $\Gamma\left(  u,v\right)  =\int_{v}^{\infty}x^{u-1}e^{-x}dx\ $is the
incomplete gamma function.\bigskip\bigskip

\noindent Table 4 gives $I_{R}\left(  0.2\right)  $ of the MOEGG distribution
for different choices of parameters $\alpha,\beta,\lambda$\ and $\theta$.

\begin{center}%
\begin{table}[h] \centering
\caption{Rényi entropy of the MOEGG distribution.}%
\begin{tabular}
[c]{ccccc}\hline
$\alpha$ & $\beta$ & $\lambda$ & $\theta$ & $I_{R}\left(  0.2\right)
$\\\hline
$5$ & $2$ & $1.5$ & $0.1$ & $0.05879$\\
$0.2$ & $0.7$ & $0.9$ & $0.1$ & $0.12745$\\
$5$ & $2$ & $1.5$ & $0.8$ & $0.14771$\\
$0.2$ & $0.7$ & $0.9$ & $0.8$ & $0.42179$\\
$5$ & $2$ & $1.5$ & $1.2$ & $0.15798$\\
$0.2$ & $0.7$ & $0.9$ & $1.2$ & $0.47002$\\
$5$ & $2$ & $1.5$ & $7.5$ & $0.17568$\\
$0.2$ & $0.7$ & $0.9$ & $7.5$ & $0.62701$\\\hline
\end{tabular}%
\end{table}%

\end{center}

\subsection{\textit{Order statistics}}

The order statistics play an important role in reliability and life testing.
Let $X_{1},\ldots,X_{n}$ be a simple random sample from MOEGG distribution
with cdf and pdf as in (\ref{3}) and (\ref{4}), respectively. Let $X_{1,n}%
\leq\cdots\leq X_{n,n}$ denote the order statistics obtained from this sample.
In reliability literature, the $k$th order statistic, say $X_{k;n}$, denotes
the lifetime of an $(n-k+1)$--out--of--$n$ system which consists of $n$
independent and identically components. The pdf of $X_{k;n}$ is given by%
\[
f_{k,n}\left(  x\right)  =\frac{n!}{\left(  n-k\right)  !\left(  k-1\right)
!}f\left(  x\right)  \left[  F\left(  x\right)  \right]  ^{k-1}\left[
1-F\left(  x\right)  \right]  ^{n-k},\text{ for }k=1,\ldots,n.
\]
Since $0<F(x)<1$ for $x>0$, then by using the binomial series expansion of
$\left[  1-F\left(  x\right)  \right]  ^{n-k}$, we obtain%
\[
f_{k,n}\left(  x\right)  =\frac{n!}{\left(  n-k\right)  !\left(  k-1\right)
!}\sum_{j=0}^{n-k}\binom{n-k}{j}\left(  -1\right)  ^{j}f\left(  x\right)
\left[  F\left(  x\right)  \right]  ^{^{k+j-1}}.
\]
Then from (\ref{2}), we have%
\[
f_{k,n}\left(  x\right)  =\frac{\theta n!g\left(  x\right)  }{\left(
k-1\right)  !\left(  n-k\right)  !}\sum_{j=0}^{n-k}\binom{n-k}{j}\left(
-1\right)  ^{j}\frac{\left[  G\left(  x\right)  \right]  ^{k+j-1}}{\left[
\theta+\overline{\theta}G\left(  x\right)  \right]  ^{k+j+1}}.
\]
For $\theta\in(0,1)$, by using expansion (\ref{6}) in the above equation and
after some algebra, we get%
\begin{equation}
f_{k,n}\left(  x\right)  =\sum_{i=0}^{\infty}\sum_{j=0}^{n-k}\sum_{l=0}%
^{i}\sum_{m=0}^{i+j-l+k-1}c_{ijlm}g\left(  x;\alpha\left(  m+1\right)
,\beta,\lambda\right)  , \label{9}%
\end{equation}
where%
\[
c_{ijlm}=\frac{n!\theta\left(  1-\theta\right)  ^{j}\binom{i}{l}\binom
{i+j+k}{i}\binom{i+j-l+k-1}{m}\left(  -1\right)  ^{i+j-l+m}}{\left(
k-1\right)  !\left(  n-k\right)  !\left(  m+1\right)  !}.
\]
In a similar way, if $\theta>1,$ we get%
\begin{equation}
f_{k,n}\left(  x\right)  =\sum_{i=0}^{\infty}\sum_{j=0}^{n-k}\sum
_{l=0}^{i+j+k-1}t_{ijl}g\left(  x;\alpha\left(  l+1\right)  ,\beta
,\lambda\right)  , \label{10}%
\end{equation}
where%
\[
t_{ijl}=\frac{n!\left(  \theta-1\right)  ^{j}\binom{i+j+k}{i}\binom
{i+j+k-1}{m}\left(  -1\right)  ^{j+l}}{\theta^{i+j+k}\left(  k-1\right)
!\left(  n-k\right)  !\left(  l+1\right)  !}.
\]
It is clear that%
\[
\sum_{i=0}^{\infty}\sum_{j=0}^{n-k}\sum_{l=0}^{i}\sum_{m=0}^{i+j-l+k-1}%
c_{ijlm}=\sum_{i=0}^{\infty}\sum_{j=0}^{n-k}\sum_{l=0}^{i+j+k-1}t_{ijl}=1.
\]

\noindent Equations (\ref{9}) and (\ref{10}) show that the pdf of the MOEGG
order statistics is an infinite linear combination of GG density functions.
So, several mathematical properties of the these order statistics can be
obtained directly from the properties of the GG such as the ordinary moments,
inverse and factorial moments, moment generating function, mean deviations.
The $r$th moment of the $k$th order statistic is determined from (\ref{9}) and
(\ref{10}) as%
\[
E\left(  X_{k,n}^{r}\right)  =\sum_{i=0}^{\infty}\sum_{j=0}^{n-k}\sum
_{l=0}^{i}\sum_{m=0}^{i+j-l+k-1}c_{ijlm}E\left(  Y_{m+1}^{r}\right)  ,\text{
\ if \ }\theta\in\left(  0,1\right)  ,
\]
and%
\[
E\left(  X_{k,n}^{r}\right)  =\sum_{i=0}^{\infty}\sum_{j=0}^{n-k}\sum
_{l=0}^{i+j+k-1}t_{ijl}E\left(  Y_{l+1}^{r}\right)  ,\text{ \ if \ }\theta>1,
\]
where%
\[
E\left(  X_{i+1}^{r}\right)  =\frac{\alpha\left(  i+1\right)  r!}{\lambda^{r}%
}\sum_{j=0}^{\infty}\binom{\alpha\left(  i+1\right)  -1}{j}\frac{\left(
-1\right)  ^{j}e^{\frac{\beta}{\lambda}\left(  j+1\right)  }}{\left(
j+1\right)  }E_{1}^{r-1}\left(  \frac{\beta}{\lambda}\left(  j+1\right)
\right)  .
\]
\bigskip

\subsection{\textit{Stochastic orderings}}

Stochastic order is the most common notion of comparison of random variables.
This concept can be applied in several area, such as insurance, operations
research, queuing theory, survival analysis and reliability theory (see
\cite{r21}). Let $X$ and $Y$ be two random variables with distribution
functions $F$ and $G$ and density functions $f$ and $g$, respectively. We say that

\begin{enumerate}
\item $X$ is smaller than $Y$ in the stochastic\ order, denoted by $X\leq
_{st}Y$, if $F(x)\leq G(x)$ for all $x$.

\item $X$ is smaller than $Y$ in the likelihood ratio order, denoted by
$X\leq_{lr}Y$, if $f(x)/g(x)$ is decreasing in $x\geq0$,

\item $X$ is smaller than $Y$ in the hazard rate order, denoted by $X\leq
_{hr}Y$, if $\left(  1-F(x)\right)  /\left(  1-G(x)\right)  $ is decreasing in
$x\geq0$.

\item $X$ is smaller than $Y$ in the reversed hazard rate order, denoted by
$X\leq_{rhr}Y$, if $F(x)/G(x)$ is decreasing in $x\geq0$.
\end{enumerate}

We have the following implications (see \cite{r21}),%
\begin{equation}
X\leq_{rhr}Y\Longleftarrow X\leq_{lr}Y\Longrightarrow X\leq_{hr}%
Y\Longrightarrow X\leq_{st}Y. \label{11}%
\end{equation}

\begin{theorem}
Let $X\sim MOEGG\left(  \alpha,\beta,\lambda,\theta_{1}\right)  $ and let
$Y\sim MOEGG\left(  \alpha,\beta,\lambda,\theta_{2}\right)  $. If $\theta
_{1}<\theta_{2}$, then%
\[
X\leq_{lr}Y\text{, }X\leq_{rhr}Y,\text{ }X\leq_{hr}Y\text{\ \ and \ }%
X\leq_{st}Y.
\]

\end{theorem}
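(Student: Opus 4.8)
The plan is to prove only the single relation $X\leq_{lr}Y$; the remaining three orderings then follow at once from the chain of implications (\ref{11}). So it suffices to show that the likelihood ratio $f_X(x)/f_Y(x)$ is a decreasing function of $x$ on $(0,\infty)$, where $f_X$ and $f_Y$ are the MOEGG densities with tilt parameters $\theta_1$ and $\theta_2$, built on the \emph{same} baseline GG distribution (common $\alpha,\beta,\lambda$) with cdf $G$ and pdf $g$.

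First I would write both densities in the general MOE form (\ref{2}) and form the quotient. Because the baseline density $g(x)$ is shared, it cancels, and the ratio collapses to a function of $G(x)$ alone:
\[
\frac{f_X(x)}{f_Y(x)}=\frac{\theta_1}{\theta_2}\left(\frac{\theta_2+(1-\theta_2)G(x)}{\theta_1+(1-\theta_1)G(x)}\right)^{2}.
\]
Since $t\mapsto t^{2}$ is increasing on the positive reals and each denominator $\theta_i+(1-\theta_i)G=\theta_i(1-G)+G$ is strictly positive for every $\theta_i>0$ and $G\in(0,1)$, it is enough to prove that
\[
\psi(x)=\frac{\theta_2+(1-\theta_2)G(x)}{\theta_1+(1-\theta_1)G(x)}
\]
is decreasing in $x$.

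Next, since $G$ is a cdf it is increasing in $x$, so by the chain rule it suffices to show that $\psi$, viewed as a function of the single variable $G\in(0,1)$, is decreasing. This is the one short computation that drives the whole result: differentiating the linear-fractional expression, the terms linear in $G$ in the numerator cancel identically and one is left with
\[
\frac{d\psi}{dG}=\frac{\theta_1-\theta_2}{\left[\theta_1+(1-\theta_1)G\right]^{2}}.
\]
As $\theta_1<\theta_2$, this is strictly negative, so $\psi$ decreases in $G$, hence in $x$, and therefore $f_X/f_Y$ decreases in $x$. This establishes $X\leq_{lr}Y$, and (\ref{11}) then delivers $X\leq_{rhr}Y$, $X\leq_{hr}Y$ and $X\leq_{st}Y$.

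There is no genuinely hard step here: the argument works precisely because the two models share a baseline, so the quotient reduces to a function of $G$ only. The sole point needing a moment's care is the positivity of the denominators in the regime $\theta_i>1$ (where $1-\theta_i<0$), which is why I rewrite $\theta_i+(1-\theta_i)G$ as the manifestly positive $\theta_i(1-G)+G$; this also makes the proof uniform across all parameter ranges and independent of the series expansions (\ref{7})--(\ref{8}).
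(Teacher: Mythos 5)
Your proof is correct and follows essentially the same route as the paper: both reduce the likelihood ratio to $\frac{\theta_1}{\theta_2}\left[\frac{\theta_2+\overline{\theta}_2 G(x)}{\theta_1+\overline{\theta}_1 G(x)}\right]^2$, show it is decreasing via a derivative whose sign is governed by $\theta_1-\theta_2$, and then invoke the implication chain (\ref{11}). The only (cosmetic) difference is that you differentiate with respect to $G$ and use monotonicity of the baseline cdf, whereas the paper differentiates directly in $x$; your explicit remark on positivity of $\theta_i+(1-\theta_i)G$ when $\theta_i>1$ is a nice touch the paper leaves implicit.
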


\begin{proof}
We have%
\[
\frac{f\left(  x\right)  }{g\left(  x\right)  }=\frac{\theta_{1}}{\theta_{2}%
}\left[  \frac{1-\overline{\theta}_{2}\left(  1-\left(  1-t\right)  ^{\alpha
}\right)  }{1-\overline{\theta}_{1}\left(  1-\left(  1-t\right)  ^{\alpha
}\right)  }\right]  ^{2},
\]
where $t=e^{-\frac{\beta}{\lambda}\left(  e^{\lambda x}-1\right)  }$. It easy
to verify%
\[
\frac{d}{dx}\left[  \frac{f\left(  x\right)  }{g\left(  x\right)  }\right]
=2\frac{\theta_{1}}{\theta_{2}}\left(  \theta_{1}-\theta_{2}\right)
\frac{\left(  \theta_{2}+\overline{\theta}_{2}\left(  1-t\right)  ^{\alpha
}\right)  \alpha\beta e^{\lambda x}t\left(  1-t\right)  ^{\alpha-1}}{\left(
\theta_{1}+\overline{\theta}_{1}\left(  1-t\right)  ^{\alpha}\right)  ^{3}}.
\]
Then, if $\theta_{1}<\theta_{2},$ we have $\frac{d}{dx}\left[  \frac{f\left(
x\right)  }{g\left(  x\right)  }\right]  <0$. So, $\frac{f\left(  x\right)
}{g\left(  x\right)  }$ is decreasing in $x$ i.e., $X\leq_{lr}Y$. From the
implications (\ref{11}), we get the remaining statements.\bigskip
\end{proof}

\section{\textbf{Estimation\label{sec 4}}}

In this section we present the maximum likelihood estimate (MLE) and derive
the asymptotic confidence intervals of the unknown parameter vector
$\mathbf{\Theta}=(\alpha,\beta,\lambda,\theta)^{T}$ of the MOEGG distribution.
Let $x_{1},\ldots,x_{n}$ be a random sample of size $n$ from MOEGG
distribution with pdf (\ref{4}), then the likelihood function is%
\begin{equation}
L\left(  \mathbf{\Theta}\right)  =%
{\displaystyle\prod\limits_{i=1}^{n}}
\frac{\alpha\beta\theta e^{\lambda x_{i}}e^{-\frac{\beta}{\lambda}\left(
e^{\lambda x_{i}}-1\right)  }\left(  1-e^{-\frac{\beta}{\lambda}\left(
e^{\lambda x_{i}}-1\right)  }\right)  ^{\alpha-1}}{\left[  \theta
+\overline{\theta}\left(  1-e^{-\frac{\beta}{\lambda}\left(  e^{\lambda x_{i}%
}-1\right)  }\right)  ^{\alpha}\right]  ^{2}}. \label{12}%
\end{equation}
Taking the logarithm of equation(\ref{12}), we get the log-likelihood function%
\begin{align*}
\ell\left(  \mathbf{\Theta}\right)   &  =n\ln\alpha+n\ln\beta+n\ln
\theta+\lambda\sum_{i=1}^{n}x_{i}-\frac{\beta}{\lambda}\sum_{i=1}^{n}\left(
e^{\lambda x_{i}}-1\right) \\
&  +\left(  \alpha-1\right)  \sum_{i=1}^{n}\ln\left(  1-e^{-\frac{\beta
}{\lambda}\left(  e^{\lambda x_{i}}-1\right)  }\right) \\
&  -2\sum_{i=1}^{n}\ln\left(  \theta+\overline{\theta}\left(  1-e^{-\frac
{\beta}{\lambda}\left(  e^{\lambda x_{i}}-1\right)  }\right)  ^{\alpha
}\right)  .
\end{align*}
The components of the score vector are given, by taking the partial
derivatives of $\ell\left(  \mathbf{\Theta}\right)  $ with respect to
$\alpha,$ $\beta,$ $\lambda$ and $\theta,$ as follows%
\begin{align*}
U_{\alpha}  &  =\frac{n}{\alpha}+\sum_{i=1}^{n}\ln\left(  1-e^{-\frac{\beta
}{\lambda}\left(  e^{\lambda x_{i}}-1\right)  }\right) \\[0.15cm]
&  -2\sum_{i=1}^{n}\frac{\overline{\theta}\left(  1-e^{-\frac{\beta}{\lambda
}\left(  e^{\lambda x_{i}}-1\right)  }\right)  ^{\alpha}\ln\left(
1-e^{-\frac{\beta}{\lambda}\left(  e^{\lambda x_{i}}-1\right)  }\right)
}{\theta+\overline{\theta}\left(  1-e^{-\frac{\beta}{\lambda}\left(
e^{\lambda x_{i}}-1\right)  }\right)  ^{\alpha}},
\end{align*}%
\begin{align*}
U_{\beta}  &  =\frac{n}{\beta}-\frac{1}{\lambda}\sum_{i=1}^{n}\left(
e^{\lambda x_{i}}-1\right)  +\left(  \alpha-1\right)  \sum_{i=1}^{n}%
\frac{\left(  e^{\lambda x_{i}}-1\right)  e^{-\frac{\beta}{\lambda}\left(
e^{\lambda x_{i}}-1\right)  }}{\lambda\left(  1-e^{-\frac{\beta}{\lambda
}\left(  e^{\lambda x_{i}}-1\right)  }\right)  }\\
&  -2\sum_{i=1}^{n}\frac{\overline{\theta}\alpha\left(  e^{\lambda x_{i}%
}-1\right)  e^{-\frac{\beta}{\lambda}\left(  e^{\lambda x_{i}}-1\right)
}\left(  1-e^{-\frac{\beta}{\lambda}\left(  e^{\lambda x_{i}}-1\right)
}\right)  ^{\alpha-1}}{\lambda\left(  \theta+\overline{\theta}\left(
1-e^{-\frac{\beta}{\lambda}\left(  e^{\lambda x_{i}}-1\right)  }\right)
^{\alpha}\right)  },
\end{align*}%
\begin{align*}
U_{\lambda}  &  =\sum_{i=1}^{n}x_{i}+\frac{\beta}{\lambda^{2}}\sum_{i=1}%
^{n}\left(  e^{\lambda x_{i}}-1\right)  -\frac{\beta}{\lambda}\sum_{i=1}%
^{n}x_{i}e^{\lambda x_{i}}\\
&  +\left(  \alpha-1\right)  \beta\sum_{i=1}^{n}\frac{\left(  \lambda
x_{i}e^{\lambda x_{i}}-e^{\lambda x_{i}}+1\right)  e^{-\frac{\beta}{\lambda
}\left(  e^{\lambda x_{i}}-1\right)  }}{\lambda^{2}\left(  1-e^{-\frac{\beta
}{\lambda}\left(  e^{\lambda x_{i}}-1\right)  }\right)  },\\
&  -2\sum_{i=1}^{n}\frac{\alpha\beta\overline{\theta}\left(  \lambda
x_{i}e^{\lambda x_{i}}-e^{\lambda x_{i}}+1\right)  e^{-\frac{\beta}{\lambda
}\left(  e^{\lambda x_{i}}-1\right)  }\left(  1-e^{-\frac{\beta}{\lambda
}\left(  e^{\lambda x_{i}}-1\right)  }\right)  ^{\alpha-1}}{\lambda^{2}\left(
\theta+\overline{\theta}\left(  1-e^{-\frac{\beta}{\lambda}\left(  e^{\lambda
x_{i}}-1\right)  }\right)  ^{\alpha}\right)  },
\end{align*}
and%
\[
U_{\theta}=\frac{n}{\theta}-2\sum_{i=1}^{n}\frac{1-\left(  1-e^{-\frac{\beta
}{\lambda}\left(  e^{\lambda x_{i}}-1\right)  }\right)  ^{\alpha}}%
{\theta+\overline{\theta}\left(  1-e^{-\frac{\beta}{\lambda}\left(  e^{\lambda
x_{i}}-1\right)  }\right)  ^{\alpha}}.
\]
\bigskip

\noindent The MLE $\widehat{\mathbf{\Theta}}=(\widehat{\alpha},\widehat{\beta
},\widehat{\lambda},\widehat{\theta})^{T}$ of $\mathbf{\Theta}=(\alpha
,\beta,\lambda,\theta)^{T}$ is obtained by solving the nonlinear likelihood
equations $U_{\alpha}=0,U_{\beta}=0,U_{\lambda}=0$ and $U_{\theta}=0.$ Since
these equations cannot be solved analytically, then we use the statistical
software to solve them numerically via iterative methods such as a
Newton--Raphson technique.\medskip

\noindent The normal approximation for the MLE of $\mathbf{\Theta}$ is used
for constructing approximate confidence intervals, confidence regions and
testing hypotheses of the parameters $\alpha,\beta,\lambda$ and $\theta.$
Under conditions that are fulfilled for parameters in the interior of the
parameter space but not on the boundary, the asymptotic distribution of
$\sqrt{n}\left(  \widehat{\mathbf{\Theta}}-\mathbf{\Theta}\right)  \ $is
$\mathcal{N}_{4}\left(  \mathbf{0},I^{-1}\left(  \mathbf{\Theta}\right)
\right)  ,$ where $I\left(  \mathbf{\Theta}\right)  $ is the expected
information matrix (see \cite[Chapter 9]{r5}). This asymptotic behavior is
valid if $I\left(  \mathbf{\Theta}\right)  $ is replaced by $J\left(
\widehat{\mathbf{\Theta}}\right)  $, where $J\left(  \mathbf{\Theta}\right)  $
is the observed information matrix, given by%
\[
J\left(  \mathbf{\Theta}\right)  =-\left(
\begin{array}
[c]{cccc}%
U_{\alpha\alpha} & U_{\alpha\beta} & U_{\alpha\lambda} & U_{\alpha\theta}\\
. & U_{\beta\beta} & U_{\beta\lambda} & U_{\beta\theta}\\
. & . & U_{\lambda\lambda} & U_{\lambda\theta}\\
. & . & . & U_{\theta\theta}%
\end{array}
\right)  ,
\]
whose elements are given in the Appendix.\bigskip

\noindent Therefore, an $100\left(  1-\omega\right)  \%$ asymptotic confidence
intervals for the parameters $\alpha,\ \beta,$ $\lambda\ $and $\theta$ are
given by%
\[
\widehat{\alpha}\pm Z_{\frac{\omega}{2}}\sqrt{var\left(  \widehat{\alpha
}\right)  },\text{\ }\widehat{\beta}\pm Z_{\frac{\omega}{2}}\sqrt{var\left(
\widehat{\beta}\right)  },\text{\ }\widehat{\lambda}\pm Z_{\frac{\omega}{2}%
}\sqrt{var\left(  \widehat{\lambda}\right)  },\ \text{and\ \ }\widehat{\theta
}\pm Z_{\frac{\omega}{2}}\sqrt{var\left(  \widehat{\theta}\right)  },
\]
respectively, where $var\left(  \cdot\right)  $ is the diagonal element of
$J^{-1}\left(  \widehat{\mathbf{\Theta}}\right)  $ corresponding to each
parameter and $Z_{\frac{\omega}{2}}$ is the quantile $(1-\omega/2)\ $of the
standard normal distribution.\bigskip

\section{\textbf{Application\label{sec 5}}}

In this section, we give an application of the MOEGG distribution to one real
data set to demonstrate its superiority. The data, given in Table 5, represent
the strengths of 1.5 cm glass fibres measured at the National Physical
Laboratory, England (see \cite{r22}). Table 6 gives a descriptive summary of
the data.

\begin{center}%
\begin{table}[h] \centering
\caption{The strengths of glass fibers.}%
\begin{tabular}
[c]{lllllllllll}\hline
0.55 & 0.93 & 1.25 & 1.36 & 1.49 & 1.52 & 1.58 & 1.61 & 1.64 & 1.68 & 1.73\\
1.04 & 1.27 & 1.39 & 1.49 & 1.53 & 1.59 & 1.61 & 1.66 & 1.68 & 1.76 & 1.82\\
1.28 & 1.42 & 1.50 & 1.54 & 1.60 & 1.62 & 1.66 & 1.69 & 1.76 & 1.84 & 2.24\\
1.48 & 1.50 & 1.55 & 1.61 & 1.62 & 1.66 & 1.70 & 1.77 & 1.84 & 0.84 & 1.24\\
1.55 & 1.61 & 1.63 & 1.67 & 1.70 & 1.78 & 1.89 & 1.81 & 2.00 & 0.74 & 2.01\\
0.77 & 1.11 & 0.81 & 1.13 & 1.29 & 1.30 & 1.48 & 1.51 &  &  & \\\hline
\end{tabular}%
\end{table}%
\bigskip%

\begin{table}[h] \centering
\caption{Descriptive statistics.}%
\begin{tabular}
[c]{lllllll}\hline
Mean & Variance & Median & Kurtosis & Skewness & Min. & Max.\\\hline
\multicolumn{1}{c}{$1.51$} & \multicolumn{1}{c}{$0.11$} &
\multicolumn{1}{c}{$1.59$} & \multicolumn{1}{c}{$3.92$} &
\multicolumn{1}{c}{$-0.90$} & \multicolumn{1}{c}{$0.55$} &
\multicolumn{1}{c}{$2.24$}\\\hline
\end{tabular}%
\end{table}%
\bigskip
\end{center}

\noindent We compare the MOEGG model with the Gompertz, GG, BG, McG,
Marshall-Olkin extend generalized exponential (MOEGE), generalized exponential
(GE) and Marshall-Olkin generalized Lindley (MOEGL) models. The pdf of the
GG\ (see \cite{r6}) distribution is%
\[
f\left(  x\right)  =\alpha\beta e^{\lambda x}e^{-\frac{\beta}{\lambda}\left(
e^{\lambda x}-1\right)  }\left(  1-e^{-\frac{\beta}{\lambda}\left(  e^{\lambda
x}-1\right)  }\right)  ^{\alpha-1},\text{ }x,\alpha,\beta,\lambda>0.
\]
For $\alpha=1$ the GG distribution becomes the Gompertz distribution. The pdf
of the BG distribution (see \cite{r12}) is%
\[
f\left(  x\right)  =\frac{\beta e^{\lambda x}e^{-\frac{\gamma\beta}{\lambda
}\left(  e^{\lambda x}-1\right)  }}{B\left(  \theta,\gamma\right)  }\left(
1-e^{-\frac{\beta}{\lambda}\left(  e^{\lambda x}-1\right)  }\right)
^{\theta-1},\text{ }x,\beta,\lambda,\theta,\gamma>0.
\]
The pdf of the McG distribution (see \cite{r18}) is%
\[
f\left(  x\right)  =\frac{\delta\beta e^{\lambda x}e^{-\frac{\beta}{\lambda
}\left(  e^{\lambda x}-1\right)  }}{B\left(  \theta/\delta,\gamma\right)
}\left(  1-e^{-\frac{\beta}{\lambda}\left(  e^{\lambda x}-1\right)  }\right)
^{\theta-1}\left(  1-\left[  1-e^{-\frac{\beta}{\lambda}\left(  e^{\lambda
x}-1\right)  }\right]  ^{\delta}\right)  ^{\gamma-1},
\]
for $x,\beta,\lambda,\theta,\gamma,\delta>0.$ The pdf of the MOEGL
distribution (see \cite{r3}) is%
\[
f\left(  x\right)  =\frac{\alpha\theta\lambda^{2}e^{-\lambda x}\left(
1+x\right)  \left(  1-\frac{1+\lambda+\lambda x}{1+\lambda}e^{-\lambda
x}\right)  ^{\alpha-1}}{\left(  1+\lambda\right)  \left[  \theta
+\overline{\theta}\left(  1-\frac{1+\lambda+\lambda x}{1+\lambda}e^{-\lambda
x}\right)  ^{\alpha}\right]  ^{2}},\text{ }x,\alpha,\lambda,\theta>0.
\]
The pdf of the MOEGE distribution (see \cite{r20}) is%
\[
f\left(  x\right)  =\frac{\alpha\lambda\theta e^{-\lambda x}\left(
1+x\right)  \left(  1-e^{-\lambda x}\right)  ^{a-1}}{\left(  \theta
+\overline{\theta}\left(  1-e^{-\theta x}\right)  ^{\alpha}\right)  ^{2}%
},\text{ }x,\alpha,\lambda,\theta>0.
\]
For $\theta=1$ the MOEGE distribution becomes the GE distribution.\medskip

\noindent The model selection is carried out using the maximized loglikelihood
($-\ln(L)$), Akaike Information Criterion (AIC), Consistent Akaike Information
Criteria (CAIC), Bayesian Information Criteria (BIC), Kolmogorov-Smirnov (K-S)
statistic with its respective p-value, Cram\'{e}r-von Mises (CM) statistic and
Anderson-Darling (AD) statistic. The better distribution to fit the data
corresponds to smaller values of these statisticsThe large p-value for the K-S
test. The MLEs and the corresponding standard errors (SEs) in parenthese are
listed in Table 7. The values of the $-$log$(L)$, AIC, BIC, CAIC and K-S with
its respective p-value are listed in Table 8 whereas the values of the CM and
AD are given in Table 9\textbf{. }So\textbf{, }we conclude that the MOEGG
distribution gives an excellent fit for the data set. In addition, the plots
of the densities together with the data histogram (Figure 3), cdfs with
empirical distribution function (Figure 4) and the probability plots (Figure
5) confirm that the MOEGG model yields a better fit.

\begin{center}%
\begin{table}[h] \centering
\caption{MLEs of the model parameters and the corresponding standard errors
given in parentheses.}%
\begin{tabular}
[c]{lcccccc}\hline
Model & $\alpha$ & $\beta$ & $\lambda$ & $\theta$ & $\gamma$ & $\delta
$\\\hline
MOEGG & $2.2193$ & $0.6791$ & $1.3929$ & $19.1052$ & $-$ & $-$\\
& $\left(  1.9901\right)  $ & $\left(  0.7286\right)  $ & $\left(
0.6427\right)  $ & $\left(  28.2384\right)  $ & $-$ & $-$\\
MOEGE & $34.8741$ & $-$ & $5.6105$ & $158.5649$ & $-$ & $-$\\
& $\left(  34.3649\right)  $ & $-$ & $\left(  0.5917\right)  $ & $\left(
160.3973\right)  $ & $-$ & $-$\\
MOEGL & $27.3356$ & $-$ & $6.0619$ & $176.9702$ & $-$ & $-$\\
& $\left(  32.0782\right)  $ & $-$ & $\left(  0.6202\right)  $ & $\left(
215.4523\right)  $ & $-$ & $-$\\
BG & $-$ & $0.0357$ & $2.8487$ & $1.6357$ & $1.0650$ & $-$\\
& $-$ & $\left(  0.0553\right)  $ & $\left(  0.8196\right)  $ & $\left(
0.7610\right)  $ & $\left(  2.0421\right)  $ & $-$\\
McG & $-$ & $0.0699$ & $2.1279$ & $2.0612$ & $2.6692$ & $4.8180$\\
& $-$ & $\left(  0.1200\right)  $ & $\left(  1.3567\right)  $ & $\left(
1.3565\right)  $ & $\left(  4.8985\right)  $ & $\left(  6.5598\right)  $\\
GG & $1.6253$ & $0.0368$ & $2.8647$ & $-$ & $-$ & $-$\\
& $\left(  0.6971\right)  $ & $\left(  0.0432\right)  $ & $\left(
0.6824\right)  $ & $-$ & $-$ & $-$\\
GE & $31.3068$ & $-$ & $2.6106$ & $-$ & $-$ & $-$\\
& $\left(  9.5016\right)  $ & $-$ & $\left(  0.2379\right)  $ & $-$ & $-$ &
$-$\\
Gompertz & $-$ & $0.0091$ & $3.6262$ & $-$ & $-$ & $-$\\
& $-$ & $\left(  0.0049\right)  $ & $\left(  0.3467\right)  $ & $-$ & $-$ &
$-$\\\hline
\end{tabular}%
\end{table}%
%

\begin{table}[h] \centering
\caption{The statistics: -ln(L), AIC, CAIC, BIC, K-S and
p-value.}%
\begin{tabular}
[c]{lcccccc}\hline
Model & $-\ln(L)$ & AIC & CAIC & BIC & K-S & p-value\\\hline
MOEGG & $\mathbf{12.0571}$ & $\mathbf{32.1143}$ & $\mathbf{32.6660}$ &
$\mathbf{40.6868}$ & $\mathbf{0.1008}$ & $\mathbf{0.5443}$\\
MOEGE & $16.1898$ & $38.3796$ & $38.6847$ & $44.8090$ & $0.1353$ & $0.1993$\\
MOEGL & $15.9565$ & $37.9129$ & $38.2180$ & $44.3423$ & $0.1309$ & $0.2304$\\
BG & $14.1443$ & $36.2886$ & $36.8403$ & $44.8611$ & $0.1323$ & $0.2205$\\
McG & $13.8548$ & $37.7097$ & $38.5868$ & $48.4253$ & $0.1316$ & $0.2254$\\
GG & $14.1456$ & $34.2912$ & $34.5963$ & $40.7206$ & $0.1321$ & $0.2213$\\
GE & $31.3835$ & $66.7670$ & $66.9003$ & $71.0532$ & $0.2290$ & $0.0027$\\
Gompertz & $14.8100$ & $33.6200$ & $33.7533$ & $37.9063$ & $0.2382$ & $0.2458
$\\\hline
\end{tabular}
\label{tab4}%
\end{table}%
%

\begin{table}[h] \centering
\caption{The statistics CM and AD.}%
\begin{tabular}
[c]{lcc}\hline
Model & CM & AD\\\hline
MOEGG & $\mathbf{0.0937}$ & $\mathbf{0.5333}$\\
MOEGE & $0.2861$ & $1.5587$\\
MOEGL & $0.2747$ & $1.4968$\\
BG & $0.1624$ & $0.9091$\\
McG & $0.1546$ & $0.8658$\\
GG & $0.1623$ & $0.9085$\\
GE & $0.7798$ & $4.2334$\\
Gompertz & $0.1460$ & $0.8348$\\\hline
\end{tabular}%
\end{table}%

\end{center}

\noindent The asymptotic variance-covariance matrix of the MLEs of the MOEGG
model parameters, which is the inverse of the observed $J^{-1}\left(
\widehat{\mathbf{\Theta}}\right)  $, is%
\[
\left(
\begin{array}
[c]{cccc}%
4.03019 & 0.65668 & -0.59059 & -12.78541\\
0.65668 & 0.51935 & -0.44066 & 16.05558\\
-0.59059 & -0.44066 & 0.38014 & -12.76136\\
-12.78541 & 16.05558 & -12.76136 & 902.45416
\end{array}
\right)  .
\]

\noindent Therefore, the 95\% asymptotic confidence intervals for
$\alpha,\ \beta,$ $\lambda\ $and $\theta$ are $2.2137\pm3.93476,$
$0.7011\pm1.41249,$ $1.3739\pm1.20844\ $and $20.1122\pm58.88011$ respectively.%

{\parbox[b]{4.7202in}{\begin{center}
\includegraphics[
trim=0.000000in 0.209306in 0.357482in 1.110532in,
height=2.8106in,
width=4.7202in
]%
{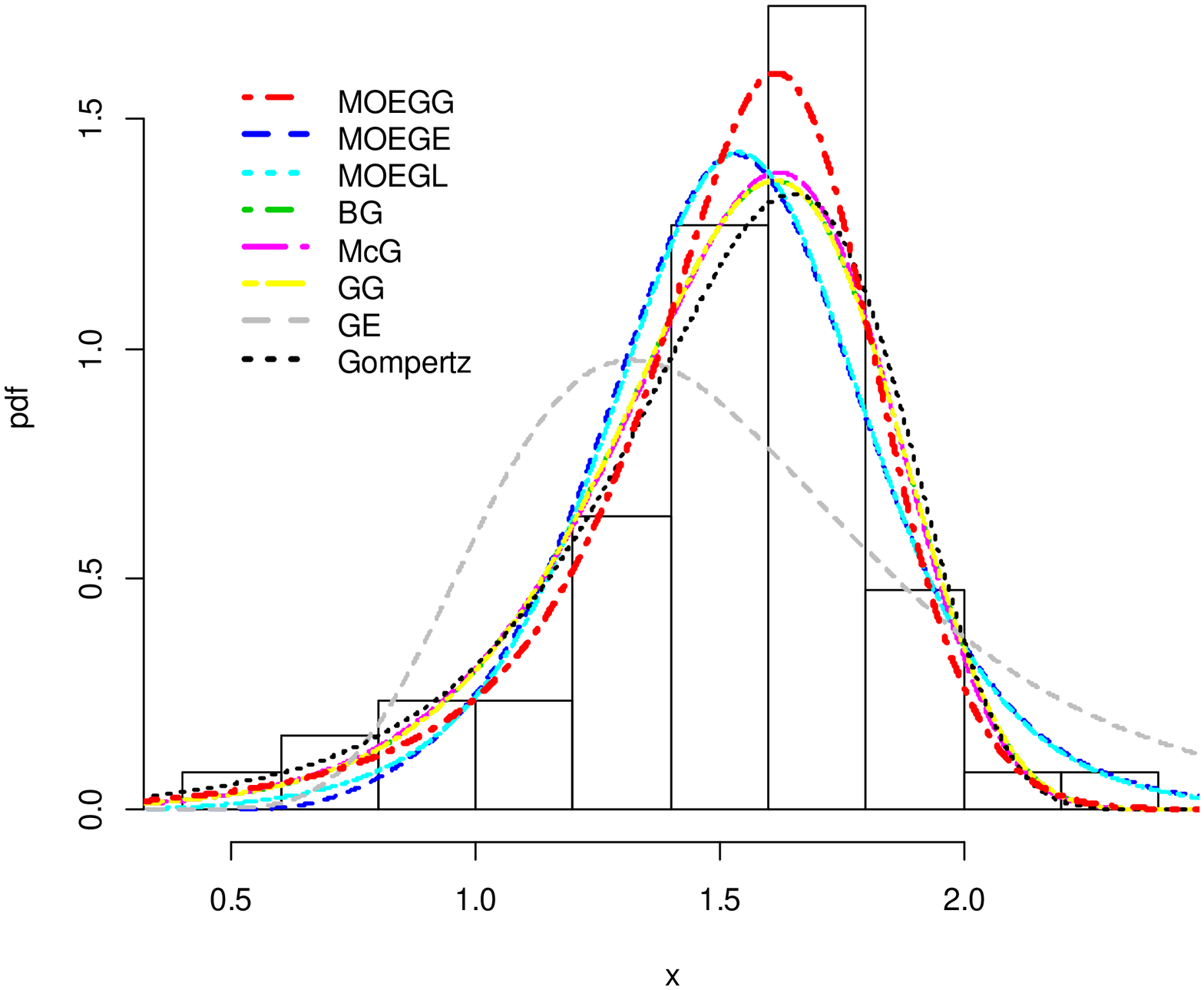}%
\\
Figure 3. Histogram and estimated densities.
\end{center}}}

%

{\parbox[b]{4.7297in}{\begin{center}
\includegraphics[
trim=0.000000in 0.157412in 0.263563in 0.958309in,
height=2.8902in,
width=4.7297in
]%
{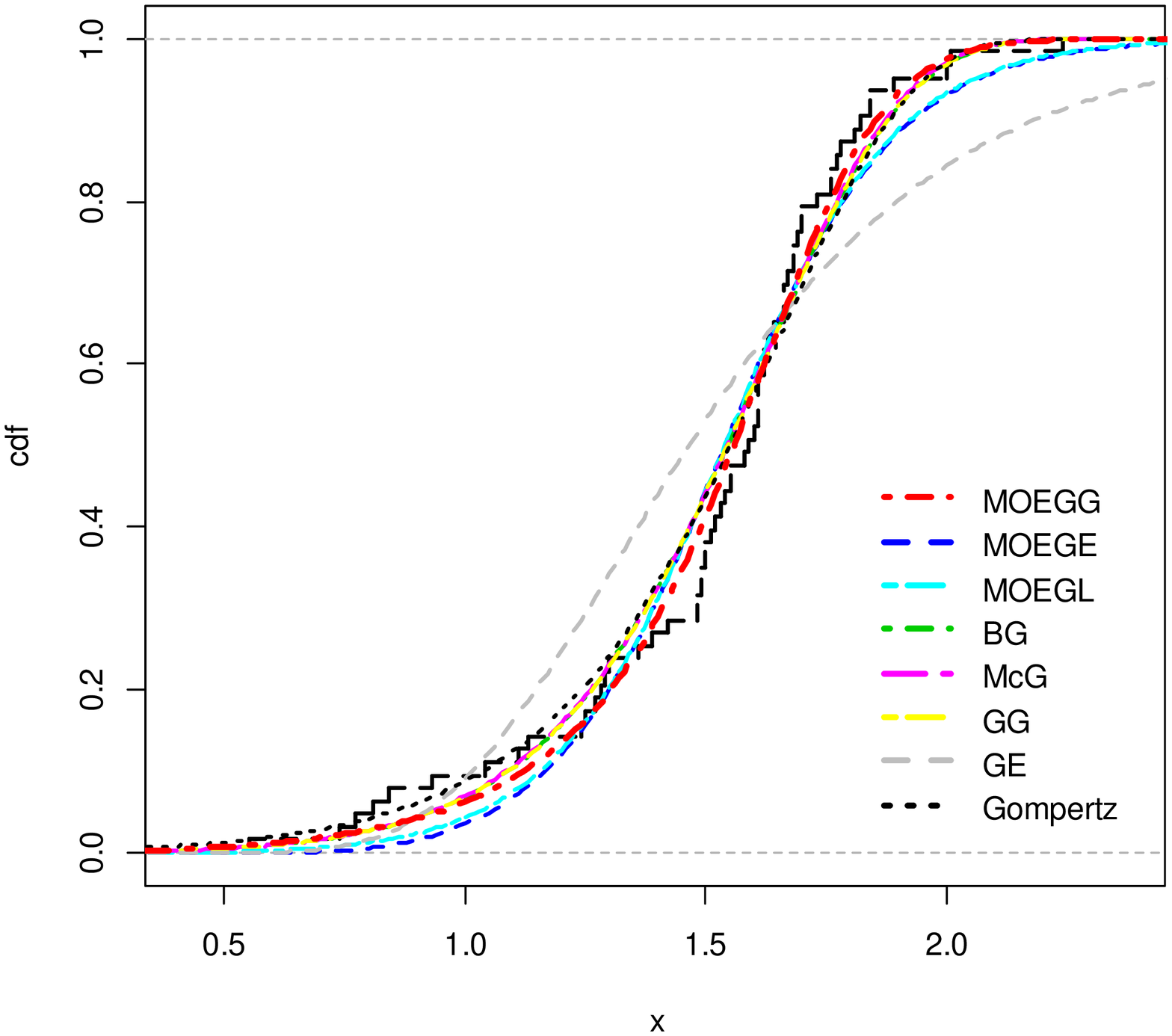}%
\\
Figure 4. The estimated cdfs and the empirical function.
\end{center}}}

\bigskip%
{\parbox[b]{5.4613in}{\begin{center}
\includegraphics[
height=4.785in,
width=5.4613in
]%
{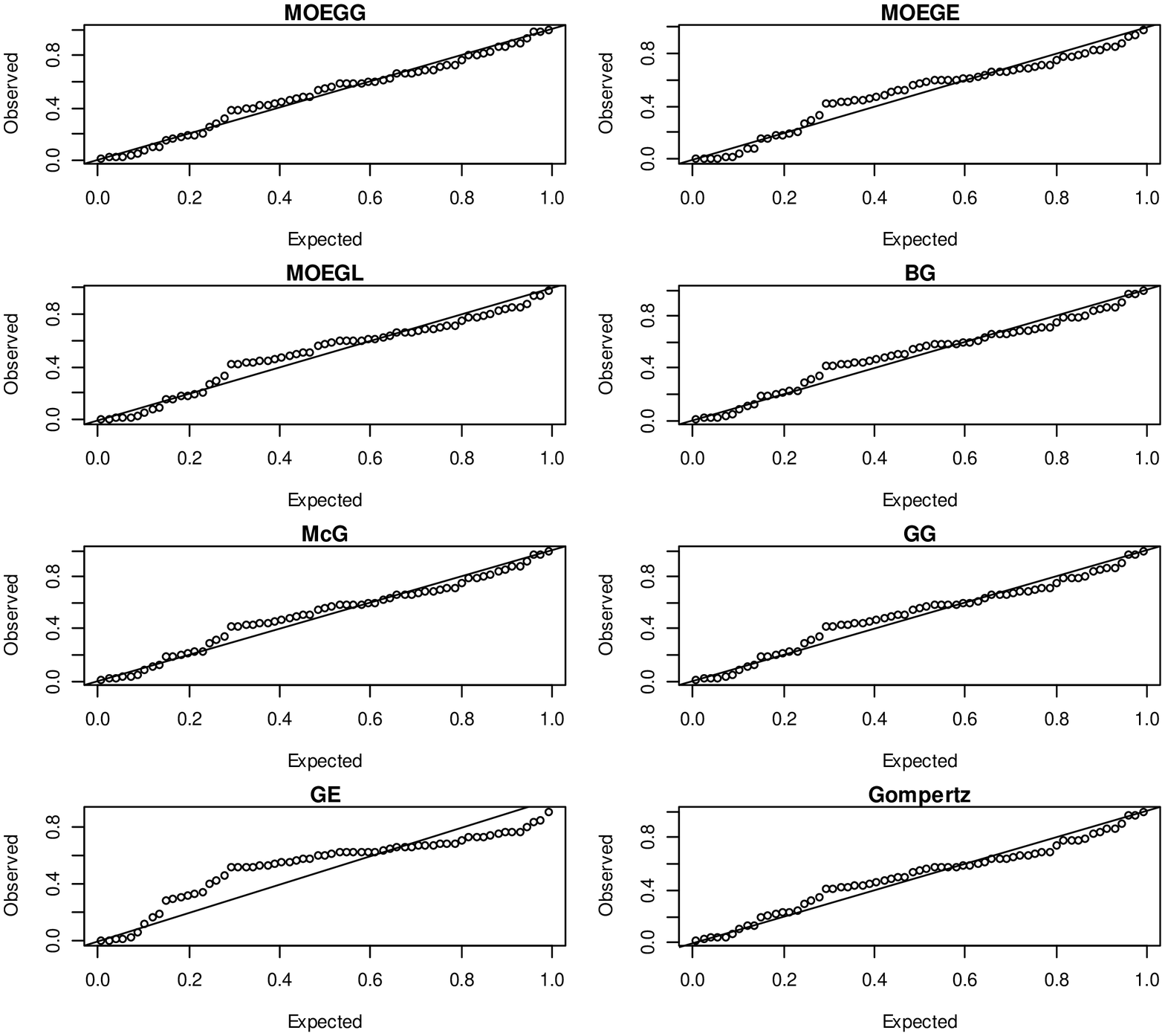}%
\\
Figure 5. The probability plots of the fitted models.
\end{center}}}

\bigskip

\section{\textbf{Conclusion\label{sec 6}}}

In this paper, we have proposed a new four-parameter continuous distribution
called the Marshall-Olkin extended generalized Gompertz distribution. This
distribution is generated by using the Marshall and Olkin method. It includes
as special sub-models the Gompertz, exponential, generalized Gompertz,
generalized exponential, Marshall-Olkin extended Gompertz, Marshall-Olkin
extended generalized exponential and Marshall-Olkin extended exponential
distributions. We have provided some mathematical properties of the proposed
model. The estimation of the unknown parameters of\textbf{\ }this model is
approached by the method of maximum likelihood and the observed information
matrix is derived. We have presented an application to real data to illustrate
the application of the proposed model. The results of this application suggest
that the new model provides consistently better fit. We hope that the new
proposed model may attract wider applications in several fields.\bigskip

\noindent\textbf{Appendix: Observed information matrix \label{appendix}%
}\medskip

\noindent Let $z_{i}=e^{-\frac{\beta}{\lambda}\left(  e^{\lambda x_{i}%
}-1\right)  }.$ The elements of the observed information matrix $J\left(
\mathbf{\Theta}\right)  $ are%
\begin{align*}
U_{\alpha\alpha}  &  =-\frac{n}{\alpha^{2}}-2\overline{\theta}\sum_{i=1}%
^{n}\frac{\left(  1-z_{i}\right)  ^{\alpha}\left(  \ln\left(  1-z_{i}\right)
\right)  ^{2}}{\theta+\overline{\theta}\left(  1-z_{i}\right)  ^{\alpha}%
}\\[0.2cm]
&  +2\overline{\theta}^{2}\sum_{i=1}^{n}\left(  \frac{\left(  1-z_{i}\right)
^{\alpha}\ln\left(  1-z_{i}\right)  }{\theta+\overline{\theta}\left(
1-z_{i}\right)  ^{\alpha}}\right)  ^{2},\\[0.6cm]
U_{\alpha\beta}  &  =\frac{1}{\lambda}\sum_{i=1}^{n}\frac{\left(  e^{\lambda
x_{i}}-1\right)  z_{i}}{1-z_{i}}-\frac{2\alpha\overline{\theta}}{\lambda}%
\sum_{i=1}^{n}\frac{\left(  e^{\lambda x_{i}}-1\right)  z_{i}\left(
1-z_{i}\right)  ^{\alpha-1}\ln\left(  1-z_{i}\right)  }{\theta+\overline
{\theta}\left(  1-z_{i}\right)  ^{\alpha}}\\[0.2cm]
&  -\frac{2\overline{\theta}}{\lambda}\sum_{i=1}^{n}\frac{\left(  e^{\lambda
x_{i}}-1\right)  z_{i}}{\left(  1-z_{i}\right)  \left[  \theta+\overline
{\theta}\left(  1-z_{i}\right)  ^{\alpha}\right]  ^{2}}\\
&  +\frac{2\alpha\overline{\theta}^{2}}{\lambda}\sum_{i=1}^{n}\frac{\left(
e^{\lambda x_{i}}-1\right)  z_{i}\left(  1-z_{i}\right)  ^{2\alpha-1}%
\ln\left(  1-z_{i}\right)  }{\left[  \theta+\overline{\theta}\left(
1-z_{i}\right)  ^{\alpha}\right]  ^{2}},\\[0.6cm]
U_{\alpha\lambda}  &  =\frac{\beta}{\lambda}\sum_{i=1}^{n}\frac{\left(
\lambda x_{i}e^{\lambda x_{i}}-e^{\lambda x_{i}}+1\right)  z_{i}}{1-z_{i}%
}\\[0.2cm]
&  -\frac{2\beta\overline{\theta}}{\lambda}\sum_{i=1}^{n}\frac{\left[
1+\alpha\ln\left(  1-z_{i}\right)  \right]  \left[  \lambda x_{i}e^{\lambda
x_{i}}-e^{\lambda x_{i}}+1\right]  z_{i}\left(  1-z_{i}\right)  ^{\alpha-1}%
}{\theta+\overline{\theta}\left(  1-z_{i}\right)  ^{\alpha}},\\[0.65cm]
U_{\alpha\theta}  &  =2\sum_{i=1}^{n}\frac{\left(  1-z_{i}\right)  ^{\alpha
}\ln\left(  1-z_{i}\right)  }{\left[  \theta+\overline{\theta}\left(
1-z_{i}\right)  ^{\alpha}\right]  ^{2}},
\end{align*}%
\begin{align*}
U_{\beta\beta}  &  =-\frac{n}{\beta^{2}}-\frac{\left(  \alpha-1\right)
}{\lambda^{2}}\sum_{i=1}^{n}\frac{\left(  e^{\lambda x_{i}}-1\right)
^{2}z_{i}}{\left(  1-z_{i}\right)  }-\frac{\left(  \alpha-1\right)  }%
{\lambda^{2}}\sum_{i=1}^{n}\frac{z_{i}^{2}\left(  e^{\lambda x_{i}}-1\right)
^{2}}{\left(  1-z_{i}\right)  ^{2}}\\
&  +\frac{2\overline{\theta}\alpha}{\lambda^{2}}\sum_{i=1}^{n}\frac{\left(
e^{\lambda x_{i}}-1\right)  ^{2}z_{i}\left(  1-z_{i}\right)  ^{\alpha-1}%
}{\theta+\overline{\theta}\left(  1-z_{i}\right)  ^{\alpha}}\\
&  -\frac{2\overline{\theta}\alpha\left(  \alpha-1\right)  }{\lambda^{2}}%
\sum_{i=1}^{n}\frac{\left(  e^{\lambda x_{i}}-1\right)  ^{2}z_{i}^{2}\left(
1-z_{i}\right)  ^{\alpha-2}}{\theta+\overline{\theta}\left(  1-z_{i}\right)
^{\alpha}}\\
&  +\frac{2\overline{\theta}^{2}\alpha^{2}}{\lambda^{2}}\sum_{i=1}^{n}\left(
\frac{\left(  e^{\lambda x_{i}}-1\right)  z_{i}\left(  1-z_{i}\right)
^{\alpha-1}}{\theta+\overline{\theta}\left(  1-z_{i}\right)  ^{\alpha}%
}\right)  ^{2},
\end{align*}

\begin{align*}
U_{\beta\lambda}  &  =\frac{1}{\lambda^{2}}\sum_{i=1}^{n}\left(  e^{\lambda
x_{i}}-1\right)  -\frac{1}{\lambda}\sum_{i=1}^{n}x_{i}e^{\lambda x_{i}}\\
&  +\frac{\left(  \alpha-1\right)  }{\lambda^{2}}\sum_{i=1}^{n}\frac{\left(
\lambda-\beta e^{\lambda x_{i}}+\beta\right)  x_{i}e^{\lambda x_{i}}z_{i}%
}{1-z_{i}}\\
&  +\frac{\beta\left(  \alpha-1\right)  }{\lambda^{3}}\sum_{i=1}^{n}%
\frac{\left(  e^{\lambda x_{i}}-1\right)  ^{2}z_{i}}{1-z_{i}}-\frac{\left(
\alpha-1\right)  }{\lambda}\sum_{i=1}^{n}\frac{\left(  e^{\lambda x_{i}%
}-1\right)  z_{i}}{1-z_{i}}\\
&  -\frac{\beta\left(  \alpha-1\right)  }{\lambda^{3}}\sum_{i=1}^{n}%
\frac{\left(  \lambda x_{i}e^{\lambda x_{i}}-e^{\lambda x_{i}}+1\right)
\left(  e^{\lambda x_{i}}-1\right)  z_{i}^{2}}{\left(  1-z_{i}\right)  ^{2}}\\
&  -\frac{2\overline{\theta}\alpha}{\lambda}\sum_{i=1}^{n}\frac{x_{i}%
e^{\lambda x_{i}}z_{i}\left(  1-z_{i}\right)  ^{\alpha-1}}{\theta
+\overline{\theta}\left(  1-z_{i}\right)  ^{\alpha}}\\
&  -\frac{2\overline{\theta}\alpha\beta}{\lambda^{3}}\sum_{i=1}^{n}%
\frac{\left(  e^{\lambda x_{i}}-\lambda x_{i}e^{\lambda x_{i}}+1\right)
\left(  e^{\lambda x_{i}}-1\right)  z_{i}\left(  1-z_{i}\right)  ^{\alpha-1}%
}{\theta+\overline{\theta}\left(  1-z_{i}\right)  ^{\alpha}}\\
&  -\frac{2\overline{\theta}\alpha\beta\left(  \alpha-1\right)  }{\lambda^{3}%
}\sum_{i=1}^{n}\frac{\left(  \lambda x_{i}e^{\lambda x_{i}}-e^{\lambda x_{i}%
}+1\right)  \left(  e^{\lambda x_{i}}-1\right)  z_{i}^{2}\left(
1-z_{i}\right)  ^{\alpha-2}}{\theta+\overline{\theta}\left(  1-z_{i}\right)
^{\alpha}}\\
&  +\frac{2\overline{\theta}\alpha}{\lambda^{2}}\sum_{i=1}^{n}\frac{\left(
e^{\lambda x_{i}}-1\right)  z_{i}\left(  1-z_{i}\right)  ^{\alpha-1}}%
{\theta+\overline{\theta}\left(  1-z_{i}\right)  ^{\alpha}}\\
&  +\frac{2\overline{\theta}^{2}\beta\alpha}{\lambda^{3}}\sum_{i=1}^{n}%
\frac{\left(  \lambda x_{i}e^{\lambda x_{i}}-e^{\lambda x_{i}}+1\right)
\left(  e^{\lambda x_{i}}-1\right)  z_{i}^{2}\left(  1-z_{i}\right)
^{2\alpha-2}}{\left[  \theta+\overline{\theta}\left(  1-z_{i}\right)
^{\alpha}\right]  ^{2}},
\end{align*}%
\begin{align*}
U_{\lambda\lambda}  &  =-\frac{2\beta}{\lambda^{3}}\sum_{i=1}^{n}\left(
e^{\lambda x_{i}}-1\right)  +\frac{2\beta}{\lambda^{2}}\sum_{i=1}^{n}%
x_{i}e^{\lambda x_{i}}-\frac{\beta}{\lambda}\sum_{i=1}^{n}x_{i}^{2}e^{\lambda
x_{i}}\\
&  +\frac{\beta\left(  \alpha-1\right)  }{\lambda}\sum_{i=1}^{n}\frac
{z_{i}x_{i}^{2}e^{\lambda x_{i}}}{1-z_{i}}\\
&  -\frac{\beta^{2}\left(  \alpha-1\right)  }{\lambda^{4}}\sum_{i=1}^{n}%
\frac{\left(  \lambda x_{i}e^{\lambda x_{i}}-e^{\lambda x_{i}}+1\right)
^{2}z_{i}}{1-z_{i}}\\
&  -\frac{2\beta\left(  \alpha-1\right)  }{\lambda^{3}}\sum_{i=1}^{n}%
\frac{\left(  \lambda x_{i}e^{\lambda x_{i}}-e^{\lambda x_{i}}+1\right)
z_{i}}{1-z_{i}}\\
&  -\frac{\beta^{2}\left(  \alpha-1\right)  }{\lambda^{4}}\sum_{i=1}%
^{n}\left(  \frac{\left(  \lambda x_{i}e^{\lambda x_{i}}-e^{\lambda x_{i}%
}+1\right)  z_{i}}{1-z_{i}}\right)  ^{2}\\
&  -2\alpha\beta\overline{\theta}\sum_{i=1}^{n}\frac{x_{i}^{2}e^{\lambda
x_{i}}z_{i}\left(  1-z_{i}\right)  ^{\alpha-1}}{\theta+\overline{\theta
}\left(  1-z_{i}\right)  ^{\alpha}}\\
&  +\frac{2\alpha\beta^{2}\overline{\theta}}{\lambda^{3}}\sum_{i=1}^{n}%
\frac{\left(  \lambda x_{i}e^{\lambda x_{i}}-e^{\lambda x_{i}}+1\right)
^{2}z_{i}\left(  1-z_{i}\right)  ^{\alpha-1}}{\theta+\overline{\theta}\left(
1-z_{i}\right)  ^{\alpha}}\\
&  -\frac{2\beta^{2}\alpha\left(  \alpha-1\right)  \overline{\theta}}%
{\lambda^{3}}\sum_{i=1}^{n}\frac{\left(  \lambda x_{i}e^{\lambda x_{i}%
}-e^{\lambda x_{i}}+1\right)  ^{2}z_{i}^{2}\left(  1-z_{i}\right)  ^{\alpha
-2}}{\theta+\overline{\theta}\left(  1-z_{i}\right)  ^{\alpha}}\\
&  +\frac{4\beta\alpha\overline{\theta}}{\lambda^{3}}\sum_{i=1}^{n}%
\frac{\left(  \lambda x_{i}e^{\lambda x_{i}}-e^{\lambda x_{i}}+1\right)
z_{i}\left(  1-z_{i}\right)  ^{\alpha-1}}{\theta+\overline{\theta}\left(
1-z_{i}\right)  ^{\alpha}}\\
&  +\frac{2\left(  \alpha\beta\overline{\theta}\right)  ^{2}}{\lambda^{4}}%
\sum_{i=1}^{n}\left(  \frac{\left(  \lambda x_{i}e^{\lambda x_{i}}-e^{\lambda
x_{i}}+1\right)  z_{i}\left(  1-z_{i}\right)  ^{\alpha-1}}{\theta
+\overline{\theta}\left(  1-z_{i}\right)  ^{\alpha}}\right)  ^{2},\\[0.8cm]
U_{\lambda\theta}  &  =\frac{2\alpha\beta}{\lambda^{2}}\sum_{i=1}^{n}%
\frac{\left(  \lambda x_{i}e^{\lambda x_{i}}-e^{\lambda x_{i}}+1\right)
z_{i}\left(  1-z_{i}\right)  ^{\alpha-1}}{\theta+\overline{\theta}\left(
1-z_{i}\right)  ^{\alpha}}\\
&  +\frac{2\alpha\beta\overline{\theta}}{\lambda}\sum_{i=1}^{n}\frac{\left(
\lambda x_{i}e^{\lambda x_{i}}-e^{\lambda x_{i}}+1\right)  z_{i}\left(
1-z_{i}\right)  ^{\alpha-1}\left[  1-\left(  1-z_{i}\right)  ^{\alpha}\right]
}{\left[  \theta+\overline{\theta}\left(  1-z_{i}\right)  ^{\alpha}\right]
^{2}},
\end{align*}

\begin{align*}
U_{\beta\theta}  &  =\frac{2\alpha}{\lambda}\sum_{i=1}^{n}\frac{\left(
e^{\lambda x_{i}}-1\right)  z_{i}\left(  1-z_{i}\right)  ^{\alpha-1}}%
{\theta+\overline{\theta}\left(  1-z_{i}\right)  ^{\alpha}}\\
&  +2\overline{\theta}\alpha\sum_{i=1}^{n}\frac{\left[  1-\left(
1-z_{i}\right)  ^{\alpha}\right]  \left(  e^{\lambda x_{i}}-1\right)
z_{i}\left(  1-z_{i}\right)  ^{\alpha-1}}{\left[  \theta+\overline{\theta
}\left(  1-z_{i}\right)  ^{\alpha}\right]  ^{2}},
\end{align*}
and%
\[
U_{\theta\theta}=-\frac{n}{\theta^{2}}+2\sum_{i=1}^{n}\left(  \frac{1-\left(
1-z_{i}\right)  ^{\alpha}}{\theta+\overline{\theta}\left(  1-z_{i}\right)
^{\alpha}}\right)  ^{2}.
\]
\bigskip

\bigskip

\end{document}